\documentclass{amsart}
\usepackage{graphicx} 
\usepackage{comment}
\usepackage[margin=25mm]{geometry}
\usepackage{amsmath}
\usepackage{amsfonts}
\usepackage{amssymb}
\usepackage{amsthm}
\usepackage{mathtools} 
\usepackage{enumitem} 
\usepackage{hyperref}
\hypersetup{
    colorlinks,
    linkcolor=black,
    citecolor=black,
    filecolor=black,      
    urlcolor=black,
}

\newtheorem{theorem}[subsection]{Theorem}
\newtheorem{lemma}[subsection]{Lemma}
\newtheorem{prop}[subsection]{Proposition}
\newtheorem{corollary}[subsection]{Corollary}

\theoremstyle{definition}
\newtheorem{definition}[subsection]{Definition}
\newtheorem{notation}[subsection]{Notation}

\newtheorem{remark}[subsection]{Remark}

\newcommand{\fsl}{\mathfrak{sl}}
\newcommand{\fgl}{\mathfrak{gl}}
\newcommand{\fg}{\mathfrak{g}}
\newcommand{\fh}{\mathfrak{h}}
\newcommand{\bz}{\mathbb{Z}}
\newcommand{\bc}{\mathbb{C}}
\DeclareMathOperator{\spann}{span}

\title{Finite-dimensional irreducible representations of Yangians}
\author{Mikhail Gradov and Aleksei Ilin}
\date{}

\begin{document}

\begin{abstract}
We present a direct proof of the well-known classification of finite-dimensional representations of the Yangian $Y(\fsl_2)$ following a direct approach presented in \cite{Pressley1991} for $U_q(\mathfrak{ \hat sl_2})$. 

\end{abstract}

\maketitle

\section{Introduction}
\label{section: Introduction}
Yangian for a simple Lie algebra $\fg$ is a unique graded deformation of a Hopf algebra $U(\fg[t])$ (such that $\fg[t]$ carries a standard Lie bialgebra structure); see \cite{Drinfeld1985}, \cite{gwx}. 
Finite-dimensional irreducible representations of the Yangian are classified in \cite{Drinfeld1987} with a link to the V. Tarasov works \cite{Tarasov1984}, \cite{Tarasov1985} and the claim that the general case can be deduced from the $Y(\fsl_2)$ case. However, the papers of V. Tarasov are written using the language of the quantum inverse scattering method and representations of $Y(\fgl_2)$ are classified.  The classification of finite-dimensional irreducible representations of $Y(\fgl_2)$ which uses Yangian's terminology 
was given by Molev \cite{Molev1998}. In order to get classification for $Y(\fsl_2)$ a (non-trivial) decomposition 
 $
 Y(\mathfrak{gl}_{2})\cong Z(Y(\fgl_2)) \otimes Y(\fsl_{2}) 
 $ is used (here $Z(Y(\fgl_2))$ is the center of an algebra $Y(\fgl_2)$). We also note that the proof of parameterisation in this case contains explicit construction of representations.
However, from the Lie theory point of view, it is natural first to classify representations of $Y(\fsl_2)$. When we classify irreducible representations of the Lie algebra $\fsl_2$, we obtain a necessary condition just from one relation.  

In the case of the quantum affine algebra $U_q(\hat \fsl_2)$, the classification in this way was done by V. Chari and A. Presley \cite{Pressley1991}.
The main result of the present paper is a direct proof of the parameterization of finite-dimensional irreducible representations of $Y(\fsl_2)$. As in the $\fsl_2$-case, the necessary condition for the parameters follows from one relation in $Y(\fsl_2)$; see Proposition \ref{prop: 2.8}.
For completeness, we also discuss how to prove the theorem for $Y(\fg)$ from the $Y(\fsl_2)$-case. 

\subsection*{Acknowledgments} We thank L. Rybnikov for useful remarks. This article is an output of a research project implemented as part of the Basic Research Program at the National Research University Higher School of Economics (HSE University).

\section{ Yangian \texorpdfstring{$Y(\fg)$}{Y(fg)} }
\label{section: Yangian}
Let $\fg$ be a complex simple Lie algebra, $\fh \subset \fg$ be a Cartan subalgebra, $\{\alpha_1, \ldots, \alpha_n\}$b e a set of simple roots,  $a_{ij} = \dfrac{2(\alpha_i,\alpha_j)}{(\alpha_i,\alpha_i)}$, $d_i = \dfrac{(\alpha_i,\alpha_i)}{2}$. Let
$Q=\bigoplus_{i} \mathbb{Z}\alpha_{i}$ be a root lattice and $Q_{+}=\bigoplus_{i} \mathbb{Z}_{\geq 0} \alpha_{i}$ be a dominant weights.

\subsection{Yangian for a simple Lie algebra \texorpdfstring{$\fg$}{g}.}
\label{subsection: Yangian_for_a_simple_Lie_algebra}
\begin{definition}
     \label{def:2.2}
     The Yangian $Y(\fg)$ is isomorphic to the associative algebra with generators $X_{i,r}^{\pm}$, $H_{i,r}$, $i=1,...,n$, $r \in \bz_{\geq 0}$, and the following defining relations:
     \begin{gather}
         [H_{i,r},H_{j,s}]=0, \tag{Y1} \label{Y1} \\
         [H_{i,0}, X_{j,s}^{\pm}]=\pm d_{i}a_{i,j}X_{j,s}^{\pm},  \tag{Y2} \label{Y2}\\
         [H_{i,r+1}, X_{j,s}^{\pm}]-[H_{i,r}, X_{j,s+1}^{\pm}]=\pm \frac{1}{2}d_{i}a_{ij}(H_{i,r}X_{j,s}^{\pm}+X_{j,s}^{\pm}H_{i,r}). \tag{Y3} \label{Y3} \\
         [X_{i,r}^{+}, X_{j,s}^{-}]=\delta_{i,j}H_{i,r+s}, \tag{Y4} \label{Y4} \\
         [X_{i,r+1}^{\pm}X_{j,s}^{\pm}]-[X_{i,r}^{\pm}, X_{j,s+1}^{\pm}]=\pm \frac{1}{2}d_{i}a_{ij}(X_{i,r}^{\pm}X_{j,s}^{\pm}+X_{j,s}^{\pm}X_{i,r}^{\pm}), \tag{Y5} \label{Y5} \\
         \sum_{\pi}[X_{i, r_{\pi_{1}}}^{\pm},[X_{i, r_{\pi_{2}}}^{\pm},...,[X_{i,r_{\pi_{m}}}^{\pm},X_{j,s}^{\pm}]...]]=0, \tag{Y6} \label{Y6}
     \end{gather}
     for all sequences of non-negative integers $r_{1},...,r_{m}$, where $m=1-a_{ij}$ and the sum is over all permutations $\pi$ of $\{ 1,...,m \}$. 
 \end{definition}

\begin{remark}
    For $Y(\fsl_2)$ we will use notation $e_i = X_{1,i}^+, f_i = X_{1,i}^-, h_i = H_{1,i}$.
\end{remark}
We also define $X_i^{\pm}(u) = \sum_{r \geq 0} X_{i,r}^{\pm} u^{-r}$ and $H_i(u) = 1 + \sum_{r \geq 1} H_{i,r-1} u^{-r}$.
Following Levendorskii \cite{Levendorskii1993}, we equip $Y(\fg)$ with the
filtration in which $\deg X^{\pm}_{i,r} = \deg H_{i,r} = r$, and denote by
$Y(\fg)_{r}$ the span of the monomials in the generators of degree at most $r$.

\begin{notation}
\label{not: root vectors}
For each $\alpha \in \Delta^{+}$ for a Lie algebra $\fg$ fix a decomposition
$\alpha = \alpha_{i_1} + \dots + \alpha_{i_p}$ into simple roots such that
\[
x^{\pm}_{\alpha}
= [x^{\pm}_{i_1}, [x^{\pm}_{i_2}, \dots, [x^{\pm}_{i_{p-1}}, x^{\pm}_{i_p}]
\cdots]]
\]
is a nonzero root vector of $\fg$ in the $\pm\alpha$ root space. Given a
decomposition $r = r_1 + \dots + r_p$ into nonnegative integers, put
\[
X^{\pm}_{\alpha,r}
= [X^{\pm}_{i_1,r_1}, [X^{\pm}_{i_2,r_2}, \dots,
[X^{\pm}_{i_{p-1},r_{p-1}}, X^{\pm}_{i_p,r_p}] \cdots]],
\qquad
H_{\alpha,r} = [X^{+}_{\alpha,0}, X^{-}_{\alpha,r}],
\]
and $H_{i,r} = H_{\alpha_i,r}$.
\end{notation}

\begin{remark}
\label{rem: independence of decomposition}
The above elements depend on the chosen decomposition of $r$, but only up to lower
filtration degree: if $\widetilde{X}^{\pm}_{\alpha,r}$ is obtained from another
decomposition of $r > 0$, then
$X^{\pm}_{\alpha,r} - \widetilde{X}^{\pm}_{\alpha,r} \in Y(\fg)_{r-1}$, and
likewise for $H_{\alpha,r}$.
\end{remark}

\begin{theorem}[\cite{Levendorskii1993}]
\label{thm: pbw}
Let $\prec$ be an arbitrary total ordering on the set
\[
\{X^{+}_{\alpha,r} \mid \alpha \in \Delta^{+},\ r \geq 0\}
\cup \{X^{-}_{\alpha,r} \mid \alpha \in \Delta^{+},\ r \geq 0\}
\cup \{H_{i,r} \mid 1 \leq i \leq n,\ r \geq 0\}.
\]
Then the ordered monomials form a basis of $Y(\fg)$.
\end{theorem}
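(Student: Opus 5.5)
The plan is the standard two-step argument for PBW theorems for filtered deformations: show that ordered monomials span $Y(\fg)$, then show they are linearly independent by passing to the associated graded algebra and comparing with $U(\fg[t])$.

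\emph{Spanning.} Consider $\operatorname{gr} Y(\fg)=\bigoplus_r Y(\fg)_r/Y(\fg)_{r-1}$ for the Levendorskii filtration. Relations (Y1)--(Y6) become homogeneous in $\operatorname{gr}$ once lower-degree terms are dropped. The right-hand sides of (Y3) and (Y5) have degree $r+s$, strictly less than $r+s+1$. Hence the images $\bar X^\pm_{i,r},\bar H_{i,r}$ satisfy the relations of $\fg[t]$ for $x^\pm_i t^r, h_i t^r$ in the loop-type presentation. This gives a surjection $U(\fg[t])\to\operatorname{gr}Y(\fg)$ sending $x^\pm_\alpha t^r\mapsto \bar X^\pm_{\alpha,r}$ and $h_i t^r\mapsto\bar H_{i,r}$. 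By Remark \ref{rem: independence of decomposition} these images do not depend on the chosen decomposition of $r$. The ordinary PBW theorem for $U(\fg[t])$, with the basis $\{x^\pm_\alpha t^r, h_it^r\}$ of $\fg[t]$, shows that ordered monomials in the images span $\operatorname{gr}Y(\fg)$. A standard induction on filtration degree then lifts this: any element of $Y(\fg)_r$ equals a combination of ordered monomials of degree $\le r$ plus an element of $Y(\fg)_{r-1}$. So ordered monomials span $Y(\fg)$.

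\emph{Independence.} This is the main obstacle, since it requires knowing that $\operatorname{gr}Y(\fg)$ is no smaller than $U(\fg[t])$, i.e.\ that the surjection above is injective. I would avoid constructing a PBW-type representation by hand and instead use the flatness of the Yangian as a deformation. Concretely, use Drinfeld's $J$-presentation, or equivalently the identification (for $\fg=\fsl_n$ via the RTT realization) with a deformation whose PBW property is known. Alternatively, and more self-containedly, introduce the parameter $\hbar$ and the family $Y_\hbar(\fg)$, with $\hbar$ multiplying the right-hand sides of (Y3), (Y5). There is a Rees-algebra isomorphism $\operatorname{gr}Y(\fg)\cong Y_\hbar(\fg)/\hbar Y_\hbar(\fg)$, and $Y_0=U(\fg[t])$ by the presentation of $\fg[t]$. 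It then suffices to produce enough representations to separate ordered monomials.

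For that I would use the tensor products of evaluation-type modules. More cleanly, take a large family of finite-dimensional modules $V_{a_1}\otimes\dots\otimes V_{a_k}$ pulled back via the coproduct. Their classical limits give the $\fg[t]$-modules $V(a_1)\otimes\cdots\otimes V(a_k)$, with $t$ acting by $a_j$ on the $j$-th factor. These separate elements of $U(\fg[t])$ as $k$ and the $a_j$ vary, by a Vandermonde argument on the $t$-degrees. A nontrivial linear relation among ordered monomials of top degree $r$ would give a nonzero element of $U(\fg[t])$ vanishing on all these modules, which is a contradiction. The delicate points are checking that the top-degree symbols of the Yangian actions reproduce the $\fg[t]$-actions, and that the coproduct respects the filtration. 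Both follow from $\Delta(X_{i,r})\equiv X_{i,r}\otimes1+1\otimes X_{i,r}$ modulo terms of lower filtration degree. Once independence holds in $\operatorname{gr}$, independence in $Y(\fg)$ follows immediately by looking at top-degree components.
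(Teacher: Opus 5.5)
\textbf{Verdict: there is a genuine gap, in the linear-independence half.} The paper gives no proof of this theorem; it only quotes Levendorskii. Your spanning step is correct and standard. It needs only that the loop-type relations present $\fg[t]$, so that $U(\fg[t])\to\operatorname{gr}Y(\fg)$ is surjective.

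The first problem is the ``Rees-algebra isomorphism'' $\operatorname{gr}Y(\fg)\cong Y_\hbar(\fg)/\hbar Y_\hbar(\fg)$, which is not automatic. The Rees algebra $\bigoplus_r Y(\fg)_r\hbar^r$ is $Y_\hbar(\fg)$ modulo its $\hbar$-torsion. So that isomorphism holds exactly when $Y_\hbar(\fg)$ is $\hbar$-torsion free. Since $Y_\hbar(\fg)/\hbar Y_\hbar(\fg)=U(\fg[t])$, this is precisely the injectivity of $U(\fg[t])\to\operatorname{gr}Y(\fg)$ that you are trying to prove. All the content therefore sits in the representation argument.

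The representation argument, as written, does not work for general $\fg$:
\begin{itemize}
\item \emph{The modules do not exist outside type $A$.} Modules $V_a$ with classical limit $V(a)$ for an arbitrary $\fg$-module $V$ come from the evaluation homomorphism $Y(\fsl_n)\to U(\fsl_n)$, which has no analogue for other $\fg$. You would need an independently constructed supply of finite-dimensional $Y(\fg)$-modules whose restrictions to $\fg$ separate points of $U(\fg)$; one nontrivial module and its shifted tensor powers would suffice.
\item \emph{You also need a coproduct on the Drinfeld presentation.} Both this and the previous input are substantial theorems. Within this paper the finite-dimensional theory comes after, and partly rests on, Theorem~\ref{thm: pbw} (Proposition~\ref{prop: 2.10} is proved with it), so you cannot draw on it without circularity.
\item \emph{The ``classical limit'' needs a mechanism, which the coproduct alone does not supply.} Use the shift automorphisms $\tau_a(X^{\pm}_{i,s})=\sum_{r\le s}\binom{s}{r}a^{s-r}X^{\pm}_{i,r}$ (likewise for $H_{i,s}$) and set $a_j=ac_j$. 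Then an element of $Y(\fg)_r$ acts by a polynomial in $a$ of degree at most $r$. Its $a^r$-coefficient is its symbol acting through the evaluations $t\mapsto c_j$, while elements of $Y(\fg)_{r-1}$ never reach $a^r$.
\item \emph{The separation claim needs more than a Vandermonde determinant.} Take $k$ larger than the $t$-degree. PBW for $\fg[t]$, with a basis adapted to the ideal $\fg\otimes\prod_j(t-c_j)\bc[t]$, shows that $U(\fg[t])\to U(\fg)^{\otimes k}$ is injective in that degree. You then need Harish-Chandra's theorem that finite-dimensional modules separate points of $U(\fg)$.
\end{itemize}

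With these additions your argument is complete for $\fg=\fsl_n$. For general $\fg$ you must either import the representations from an independent source, or obtain independence from the flatness of Drinfeld's original realization as a deformation of $U(\fg[t])$. The latter is the route on which the cited proof relies.
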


\begin{corollary}
\label{cor: triangular}
$Y(\fg) = Y^{-}\,Y^{0}\,Y^{+}$.
\end{corollary}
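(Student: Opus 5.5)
Since Theorem \ref{thm: pbw} allows an \emph{arbitrary} total ordering of the PBW generators, I will choose one adapted to the triangular decomposition and read off the corollary directly. Here $Y^{-}$, $Y^{0}$, $Y^{+}$ denote the subalgebras generated by the $X^{-}_{i,r}$, the $H_{i,r}$, and the $X^{+}_{i,r}$ respectively. The ordering $\prec$ is: every $X^{-}_{\alpha,r}$ precedes every $H_{i,r}$, and every $H_{i,r}$ precedes every $X^{+}_{\alpha,r}$. Inside each of the three blocks, fix any total order, for instance lexicographic in $(\alpha,r)$ or $(i,r)$.

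With this ordering, each ordered monomial factors as $m_- m_0 m_+$. Here $m_-$ is an ordered monomial in the $X^{-}_{\alpha,r}$, $m_0$ one in the $H_{i,r}$, and $m_+$ one in the $X^{+}_{\alpha,r}$.

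The elements $X^{\pm}_{\alpha,r}$ of Notation \ref{not: root vectors} are iterated commutators of the generators $X^{\pm}_{i,s}$, so they lie in $Y^{\pm}$. Hence $m_- \in Y^{-}$, $m_0 \in Y^{0}$ and $m_+ \in Y^{+}$. Theorem \ref{thm: pbw} says these monomials span $Y(\fg)$. Therefore $Y(\fg) = \sum m_- m_0 m_+ \subseteq Y^{-}Y^{0}Y^{+}$. The reverse inclusion is trivial, since $Y^{-}Y^{0}Y^{+}$ is a set of elements of $Y(\fg)$. This proves the equality as a statement about spans.

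If the corollary is meant in the stronger sense that multiplication $Y^{-}\otimes Y^{0}\otimes Y^{+}\to Y(\fg)$ is a linear isomorphism, I need one more step. The goal is to show that the ordered monomials in the $X^{-}_{\alpha,r}$ span $Y^{-}$, and similarly for $Y^{0}$ and $Y^{+}$.

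For $Y^{0}$ this is immediate by (Y1), because the $H_{i,r}$ commute.

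For $Y^{\pm}$ I would argue by induction on the filtration degree and on monomial length. Take any product of generators of $Y^{+}$ and reorder it using commutators. A commutator $[X^{+}_{\alpha,r},X^{+}_{\beta,s}]$ can be rewritten via (Y5) and Remark \ref{rem: independence of decomposition}. It becomes a multiple of $X^{+}_{\alpha+\beta,r+s}$ (zero if $\alpha+\beta$ is not a root), plus terms of lower filtration degree, plus quadratic terms in $Y^{+}$. The resulting expansion stays inside $Y^{+}$. Once spanning holds in each factor, linear independence of the tensor-product basis follows from Theorem \ref{thm: pbw}: its images are exactly the distinct PBW monomials.

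The main obstacle is this last spanning claim for $Y^{\pm}$. The key point is that all correction terms produced during reordering remain in $Y^{+}$ rather than involving $H$'s. This holds because no $X^{-}$ ever appears, so (Y4) is never used. If the induction gets delicate, my first fallback is to cite the known PBW theorem for $Y^{\pm}$ separately, from the same source \cite{Levendorskii1993}.
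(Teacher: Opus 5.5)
Your argument for the stated equality is correct and is the same as the paper's. You order the PBW generators so that every $X^{-}_{\alpha,r}$ precedes every $H_{i,s}$, which precedes every $X^{+}_{\beta,t}$; you factor each ordered monomial as $m_-m_0m_+$, with $X^{\pm}_{\alpha,r}\in Y^{\pm}$ as iterated commutators; and you conclude by the spanning half of Theorem~\ref{thm: pbw}.

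The corollary asserts only this spanning statement, so your tensor-product extension is not needed. As sketched it would also need more care. Remark~\ref{rem: independence of decomposition} only places the correction terms in $Y(\fg)_{r-1}$. The real issue is showing they have lower degree in $Y^{+}$'s own filtration, not whether (Y4) is ever used.
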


\begin{proof}
Choose $\prec$ so that $X^{-}_{\alpha,r} \prec H_{i,s} \prec X^{+}_{\beta,t}$
for all $\alpha, \beta, i, r, s, t$. Every ordered monomial is then a product of
a monomial in the $X^{-}_{\alpha,r}$, a monomial in the $H_{i,r}$ and a monomial
in the $X^{+}_{\alpha,r}$, hence lies in $Y^{-}Y^{0}Y^{+}$. By
Theorem~\ref{thm: pbw} such monomials span $Y(\fg)$.
\end{proof}

\subsection{A relation in \texorpdfstring{$Y(\fsl_2)$}{Y(sl2)}}
\label{subsection: relation}


\begin{prop}
\label{prop: 2.8}
\[
    e_{1}^{k}f_{0}^{k+1}v=(k+1)f_{0}e_{1}^{k}f_{0}^{k}v+\frac{k(k+1)}{2}[h_{1}, e_{1}^{k-1}f_{0}^{k}]v+(k-1)k(k+1)e_{1}^{k-1}f_{0}^{k}v
    \]
\end{prop}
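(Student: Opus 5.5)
Throughout I take $v$ to be a highest weight vector, i.e.\ $e_r v=0$ for all $r$ and $v$ an eigenvector of every $h_r$. This is the setting in which the proposition will be applied, and some such hypothesis is needed: for $k=1$ the two sides differ by $f_0^2e_1v$. I then prove the identity by moving the single extra $f_0$ in $e_1^k f_0^{k+1}v=e_1^k f_0\cdot f_0^k v$ to the left through $e_1^k$.

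\textbf{Step 1 (the basic commutator).} From \eqref{Y4}, $[e_1,f_0]=h_1$, so
\[
e_1^k f_0=f_0e_1^k+\sum_{j=0}^{k-1}e_1^{j}h_1e_1^{k-1-j}.
\]
Applied to $f_0^kv$, the first term gives $f_0e_1^kf_0^kv$. This is only one of the $k+1$ copies required by the statement, so the remaining copies must come out of the sum.

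\textbf{Step 2 (iterated commutators of $e_1$ with $h_1$).} Put $c=[h_1,e_1]$. Relation \eqref{Y3} with $r=0$, $s=1$ gives $c=2e_2+h_0e_1+e_1h_0$. Relation \eqref{Y5} with $r=s=1$ gives $[e_2,e_1]-[e_1,e_2]=2e_1^2$, hence $[e_1,e_2]=-e_1^2$. Together with $[h_0,e_1]=2e_1$ from \eqref{Y2}, this yields
\[
[e_1,c]=-6e_1^2,
\]
so $\operatorname{ad}_{e_1}^3h_1=0$. This is the key structural fact, and it is where the cubic coefficient comes from. Writing $\operatorname{ad}_{e_1}h_1=-c$ and $\operatorname{ad}_{e_1}^2h_1=6e_1^2$, I get
\[
e_1^jh_1=h_1e_1^j-jce_1^{j-1}+3j(j-1)e_1^{j}.
\]
Summing over $j=0,\dots,k-1$ gives
\[
\sum_{j}e_1^jh_1e_1^{k-1-j}=kh_1e_1^{k-1}-\tbinom{k}{2}ce_1^{k-2}+(k-2)(k-1)k\,e_1^{k-1}.
\]
In the same way, $[h_1,e_1^{k-1}]=\sum_i e_1^ice_1^{k-2-i}$ can be rewritten as $(k-1)ce_1^{k-2}$ plus a multiple of $e_1^{k-1}$, again using $[e_1,c]=-6e_1^2$. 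This lets me trade $ce_1^{k-2}$ for $[h_1,e_1^{k-1}]$ plus a correction proportional to $e_1^{k-1}$.

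\textbf{Step 3 (reassembling on $f_0^kv$).} Substituting back, I must match $kh_1e_1^{k-1}f_0^kv$ and the $[h_1,e_1^{k-1}]$ terms against the statement's $\frac{k(k+1)}2[h_1,e_1^{k-1}f_0^k]v$ and the $k$ extra copies of $f_0e_1^kf_0^kv$. For this I expand
\[
[h_1,e_1^{k-1}f_0^k]v=[h_1,e_1^{k-1}]f_0^kv+e_1^{k-1}[h_1,f_0^k]v,
\]
and use $[h_1,f_0]=-2f_1-(h_0f_0+f_0h_0)$ from \eqref{Y3}. I also use $h_1e_1^{k-1}f_0^kv=[h_1,e_1^{k-1}f_0^k]v+e_1^{k-1}f_0^kh_1v$, where $h_1v$ is a scalar multiple of $v$. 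If the bookkeeping does not close directly, I will argue by induction on $k$: the term $f_0e_1^kf_0^kv$ can be rewritten by commuting $e_1$ once past $f_0$ in $e_1\cdot e_1^{k-1}f_0^k$, which produces an $h_1$-term and $e_1^{k-1}f_0^{k}$-terms, and the case $k-1$ of the proposition then identifies them. The factor $k+1$ should arise as $1+k$, combining the term from Step 1 with $k$ copies produced in this way.

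\textbf{Main obstacle.} Step 2 is clean. The real difficulty is Step 3: the scalar contributions from $h_0$ and $h_1$ acting on intermediate weight vectors must cancel exactly, so that the final coefficients are independent of the highest weight. I would first verify the cases $k=1,2$ by hand to pin down how the $h_0$-weight terms cancel, and then organise the general computation as an induction on $k$ modelled on those cases.
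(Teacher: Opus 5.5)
There is a genuine gap: Step 3, which carries the whole content of the proposition, is not carried out, and the moves you list there cannot finish it by themselves.

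Steps 1 and 2 are correct. The observation $\operatorname{ad}_{e_1}^3h_1=0$ is a tidier route to what the paper gets from its formulas for $[h_1,e_1^k]$ and $[e_1^{k+1},f_0]$. After trading $c\,e_1^{k-2}$ for $[h_1,e_1^{k-1}]$ and rewriting $kh_1e_1^{k-1}=ke_1^{k-1}h_1+k[h_1,e_1^{k-1}]$, you land exactly on the paper's intermediate identity
\[
e_1^kf_0^{k+1}v=f_0e_1^kf_0^kv+ke_1^{k-1}h_1f_0^kv+\frac{k}{2}[h_1,e_1^{k-1}]f_0^kv-\frac{k(k-1)(k-2)}{2}e_1^{k-1}f_0^kv .
\]
What remains is to produce $k$ further copies of $f_0e_1^kf_0^kv$. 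None of your moves (expanding $[h_1,e_1^{k-1}f_0^k]$, using $[h_1,f_0]$, replacing $h_1v$ by a scalar) ever creates a vector with $f_0$ in the leftmost position. The only relation you have that involves $f_0e_1^kf_0^kv$ is Step 1 itself.

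The fallback induction does not obviously close either. Commuting $e_1$ past $f_0$ in $f_0e_1\cdot e_1^{k-1}f_0^kv$ gives $e_1f_0e_1^{k-1}f_0^kv$. Inserting the case $k-1$ for $e_1^{k-1}f_0^kv$ then produces vectors such as $e_1f_0^2e_1^{k-1}f_0^{k-1}v$ and $e_1f_0[h_1,e_1^{k-2}f_0^{k-1}]v$. These do not occur in the target, and the proposal gives no way to reduce them. Note also that $k+1$ multiplies \emph{all three} terms on the right-hand side, which already suggests it does not arise from counting copies.

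The missing idea is to compute the same vector a second time, pushing one $e_1$ to the right instead of $f_0$ to the left. Since $e_1v=0$, we have $e_1^kf_0^{k+1}v=e_1^{k-1}[e_1,f_0^{k+1}]v$, and
\[
[e_1,f_0^{k+1}]=(k+1)f_0^kh_1-k(k+1)f_0^{k-1}f_1+k^2(k+1)f_0^k-k(k+1)f_0^kh_0 .
\]
Your nilpotency trick gives this too, because $[f_1,f_0]=-f_0^2$ forces $\operatorname{ad}_{f_0}^3h_1=0$. Now split $ke_1^{k-1}h_1f_0^kv=ke_1^{k-1}f_0^kh_1v+ke_1^{k-1}[h_1,f_0^k]v$ and eliminate $f_0^kh_1$ using the display. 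This places $\frac{k}{k+1}e_1^kf_0^{k+1}v$ on the right-hand side. Moving it to the left gives $\frac{1}{k+1}e_1^kf_0^{k+1}v=f_0e_1^kf_0^kv+\cdots$, and this is where the global factor $k+1$ comes from.

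The leftover $f_0^{k-1}f_1$- and $h_0$-terms are absorbed using $[h_1,f_0^k]=-2k(f_0^{k-1}f_1+f_0^kh_0)+k(3k-1)f_0^k$. Everything then collapses to $\frac{k}{2}[h_1,e_1^{k-1}f_0^k]v+k(k-1)e_1^{k-1}f_0^kv$. So the dependence on the highest weight is not cancelled as scalars, as your closing paragraph anticipates. The $\lambda_1$-term is traded for $e_1^kf_0^{k+1}v$ itself, and the $\lambda_0$-term is hidden inside the commutator with $h_1$.
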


The proof is contained in Appendix A.

\subsection{Embedding of \texorpdfstring{$Y(\fsl_2)$}{Y(sl2)}}
\label{subsection: embedding}

\begin{prop}
\label{prop: 2.10}
Subalgebra $Y_i$ of $Y(\fg)$ generated by $H_{i,r}, X_{i,r}^{\pm}, r \geq 0$ is isomorphic to $Y(\fsl_2)$.

\end{prop}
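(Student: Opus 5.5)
We need a proof plan for Proposition \ref{prop: 2.10}: the subalgebra $Y_i$ generated by $H_{i,r},X^\pm_{i,r}$ is isomorphic to $Y(\fsl_2)$.

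The plan is to construct a surjective homomorphism and prove injectivity with the PBW theorem. First, define $\varphi\colon Y(\fsl_2)\to Y(\fg)$ by $e_r\mapsto d_i^{-1/2}X^+_{i,r}$, $f_r\mapsto d_i^{-1/2}X^-_{i,r}$ and $h_r\mapsto d_i^{-1}H_{i,r}$, combined if necessary with the shift $u\mapsto d_i u$, i.e.\ the generator of degree $r$ additionally multiplied by $d_i^{-r}$. The point is to normalize so that the factor $d_i a_{ii}=2d_i$ in \eqref{Y2}, \eqref{Y3}, \eqref{Y5} becomes $2$, matching the relations of $Y(\fsl_2)$, where $d=1$ and $a=2$.

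Next, check that the defining relations of $Y(\fsl_2)$ hold for the images. Relations \eqref{Y1}, \eqref{Y2}, \eqref{Y4} with $i=j$ are immediate after rescaling. For \eqref{Y3} and \eqref{Y5}, the left-hand side is linear in the generators and the right-hand side quadratic. Combining the scaling $\lambda^{\deg}$ with an overall constant therefore fixes the factor: with $X\mapsto c\,\mu^r X$ and $H\mapsto c^2\mu^r H$, the left-hand side scales as $c^2\mu^{r+s+1}$ and the right-hand side as $c^2\mu^{r+s}\cdot c^2$ times its coefficient. Since \eqref{Y3} and \eqref{Y5} scale identically, a consistent choice of $(c,\mu)$ exists. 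The Serre relation \eqref{Y6} is vacuous for $\fsl_2$. Thus $\varphi$ is a well-defined homomorphism, and it is surjective onto $Y_i$ by definition.

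The main step is injectivity. Apply Theorem \ref{thm: pbw} to $Y(\fsl_2)$: the ordered monomials in $e_r,f_r,h_r$ form a basis. Their images are, up to nonzero scalars, ordered monomials in $X^\pm_{\alpha_i,r}=X^\pm_{i,r}$ and $H_{i,r}$, which are part of the PBW generating set of $Y(\fg)$. Here we use Notation \ref{not: root vectors} with the trivial decomposition $\alpha_i=\alpha_i$, so that $X^\pm_{\alpha_i,r}=X^\pm_{i,r}$ exactly and $H_{\alpha_i,r}=[X^+_{i,0},X^-_{i,r}]=H_{i,r}$ by \eqref{Y4}. Choose a total order $\prec$ on the generating set of $Y(\fg)$ whose restriction to the $\alpha_i$-generators agrees with the order used in $Y(\fsl_2)$. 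Then distinct ordered monomials of $Y(\fsl_2)$ map to distinct ordered monomials of $Y(\fg)$, which are linearly independent by Theorem \ref{thm: pbw}. Hence $\varphi$ sends a basis to a linearly independent set, so it is injective.

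I expect the only delicate point to be bookkeeping: the normalization of constants in the first step, and confirming that the $Y(\fg)$ PBW set really contains $X^\pm_{i,r}$ and $H_{i,r}$ verbatim rather than up to lower-order terms. Since we may pick the decomposition of the simple root $\alpha_i$ to be trivial, Remark \ref{rem: independence of decomposition} is not even needed.
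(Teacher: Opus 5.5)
Your route is the paper's: a rescaled homomorphism $Y(\fsl_2)\to Y(\fg)$ whose image is $Y_i$, with injectivity from Theorem~\ref{thm: pbw} applied to both algebras. You note that the trivial decomposition of $\alpha_i$ makes $X^\pm_{i,r}$ and $H_{i,r}$ literally members of the PBW generating set, which is exactly what the paper's ``the kernel is zero by the PBW theorem'' needs.

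The map you finally write down is the correct one:
$h_r\mapsto d_i^{-r-1}H_{i,r}$, $e_r\mapsto d_i^{-r-1/2}X^+_{i,r}$, $f_r\mapsto d_i^{-r-1/2}X^-_{i,r}$.
The paper prints the exponents as $d_i^{r+1}$ and $d_i^{r+1/2}$. Those respect the relations only for the inverse map $Y_i\to Y(\fsl_2)$. The paper's later formula $H'_{i,r}v=d_i^{-r-1}\lambda_{i,r-1}v$ matches your signs.

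What needs fixing is the justification in your second paragraph. You say the left-hand sides of \eqref{Y3} and \eqref{Y5} are linear in the generators, with an extra factor $c^{2}$ on the right. That is wrong: both sides are quadratic, since the left side is a commutator. Take the ansatz $e_r,f_r\mapsto c\mu^{r}X^\pm_{i,r}$, $h_r\mapsto c^{2}\mu^{r}H_{i,r}$ and apply it to $[e_{r+1},e_s]-[e_r,e_{s+1}]=e_re_s+e_se_r$. By \eqref{Y5}, the left side becomes $c^{2}\mu^{r+s+1}d_i\,(X^+_{i,r}X^+_{i,s}+X^+_{i,s}X^+_{i,r})$. The right side becomes $c^{2}\mu^{r+s}(X^+_{i,r}X^+_{i,s}+X^+_{i,s}X^+_{i,r})$. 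The same happens for \eqref{Y3}, with $c^{3}$ on both sides.

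So $c$ cancels, and these relations force exactly $\mu d_i=1$. The shift $u\mapsto d_iu$ is therefore required whenever $d_i\neq 1$, not merely ``if necessary''. Separately, \eqref{Y2} forces $c^{2}d_i=1$, and \eqref{Y4} is built into the ansatz. Taken literally, your bookkeeping would couple $\mu$ to $c$ and, with $c^{2}d_i=1$, give $\mu=d_i^{-2}$, which fails \eqref{Y5} when $d_i\neq 1$. Once this computation is redone as above, your proof is complete.
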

\begin{proof}
Consider the map
$$\psi_{i}: Y(\fsl_2) \to Y(\fg), H_{i,r} \mapsto {d_i}^{r+1}H_{i,r}, X_{i,r}^{\pm} \mapsto {d_i}^{r+\frac{1}{2}}X_{i,r}^{\pm}$$
It is easy to check that this map preserves $Y(\fsl_2)$ relations, the image coincides with $Y_i$, and the kernel is zero by the PBW theorem.
\end{proof}

\section{Finite-dimensional representations of \texorpdfstring{$Y(\fg)$}{Y(g)}}

\subsection{Highest-weight representations.}
\label{subsection: highest_weight}
Let $\lambda_{i,r}$, for $i=1,...,n$, $r \in \mathbb{Z}_{\geq 1}$, be a collection of complex numbers, and let $\lambda_i(u) = 1 + \sum_{r \geq 1} \lambda_{i,r-1} u^{-r}$ and $\lambda(u) = (\lambda_1(u), \ldots, \lambda_n(u))$. Let $V$ be a representation of $Y(\fg)$ and we define {\em a weight space}
$$V_{\lambda(u)} = \{ v\in V | H_i(u) \cdot v = \lambda_i(u) v, \quad \forall \, i=1, \ldots, n\}. $$

We say that $v \in V$ is a primitive vector if $X_i^+(u) \cdot v = 0$ for any $i = 1, \ldots, n$ and $v \in V_{\lambda(u)}$.

 \begin{definition}
 \label{def: 3.2}
A representation $V$ of $Y(\fg)$ is called a highest weight representation if there exists a primitive vector $v \in V$ such that $V = Y(\fg) v$. We call such a vector a highest-weight vector.  
 \end{definition}

The following is well-known(see e.g. \cite{Chari1995}):
\begin{prop}
\label{prop:Highest_weight}
\begin{itemize}
    \item Every irreducible $Y(\fg)$-module $V$ is a highest-weight module.
    \item There exists a unique irreducible representation $L_{\lambda(u)}$ of the highest weight $\lambda(u) = (\lambda_1(u), \ldots, \lambda_n(u))$.
\end{itemize}
\end{prop}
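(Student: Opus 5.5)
The statement consists of two parts, and I treat them separately. Throughout I rely on the triangular decomposition $Y(\fg)=Y^{-}Y^{0}Y^{+}$ (Corollary \ref{cor: triangular}) and on the fact that $Y^{0}$ is commutative by \eqref{Y1}. Here $Y^{\pm}$ and $Y^0$ denote the subalgebras generated by the $X^{\pm}_{i,r}$ and by the $H_{i,r}$, respectively.

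\textbf{First part.} Let $V$ be a finite-dimensional irreducible $Y(\fg)$-module; finite-dimensionality is the implicit standing assumption of this section.
\begin{enumerate}
\item Since $H_{i,0}$ acts semisimply, $V$ is $\fh$-graded, where $\fh=\spann\{H_{i,0}\}$. By \eqref{Y2}, $X^{\pm}_{j,s}$ shifts weights by $\pm\alpha_j$.
\item Finite-dimensionality gives a weight $\mu$ of $V$ such that $\mu+\alpha_j$ is not a weight for any $j$. Hence $X^{+}_{j,s}V_\mu=0$ for all $j,s$.
\item $V_\mu$ is stable under the commutative algebra $Y^{0}$, because $Y^0$ preserves $\fh$-weights by \eqref{Y2} and \eqref{Y3}. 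So the $H_{i,r}$ have a common eigenvector $v\in V_\mu$. This $v$ is primitive.
\item By irreducibility, $Y(\fg)v=V$.
\end{enumerate}

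\textbf{Second part, existence.} Define the Verma module $M_{\lambda(u)}=Y(\fg)/I$, where $I$ is the left ideal generated by the $X^{+}_{i,r}$ and by the elements $H_{i,r}-\lambda_{i,r}$.
\begin{enumerate}
\item By Theorem \ref{thm: pbw}, with the ordering $X^-\prec H\prec X^+$, we get $M_{\lambda(u)}\cong Y^{-}$ as a vector space. In particular the image of $1$ is nonzero, and the $\lambda$-weight space (for $\fh$) is one-dimensional.
\item $M_{\lambda(u)}$ is graded by $-Q_+$ via $\fh$-weights. Every proper submodule is graded and misses the top component.
\item The sum of all proper submodules is therefore proper, so $M_{\lambda(u)}$ has a unique maximal submodule. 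Its quotient is $L_{\lambda(u)}$.
\end{enumerate}

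\textbf{Second part, uniqueness.} Let $V$ be an irreducible module with a primitive vector $w$ of weight $\lambda(u)$.
\begin{enumerate}
\item By Corollary \ref{cor: triangular}, $V=Y^{-}w$, and the map $M_{\lambda(u)}\to V$, $1\mapsto w$, is surjective.
\item Its kernel is a maximal submodule, so it is the unique one, and $V\cong L_{\lambda(u)}$.
\end{enumerate}

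The main point requiring care is the PBW step showing $M_{\lambda(u)}\neq 0$ and that its top weight space is spanned by the image of $1$. This is exactly where Theorem \ref{thm: pbw} is needed, rather than merely the triangular spanning statement. Everything else is standard highest-weight theory.
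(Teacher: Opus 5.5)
Your proof is correct, and it is the standard argument. The paper gives no proof of this proposition and only points to \cite{Chari1995}, which uses the same reasoning: a maximal $\fh$-weight space plus a common eigenvector of the commuting $H_{i,r}$ for the first bullet, and the Verma module $M_{\lambda(u)}\cong Y^{-}$ with its unique maximal submodule for the second. You are right to impose finite-dimensionality in the first bullet, which the paper's wording drops. Without it the claim fails: take an irreducible $\fsl_2$-module on which $e$ acts injectively and pull it back along the surjective evaluation map $Y(\fsl_2)\to U(\fsl_2)$. The result is an irreducible $Y(\fsl_2)$-module with no nonzero vector killed by $e_0$.

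Three small repairs are needed.
\begin{enumerate}
\item The semisimplicity of the $H_{i,0}$ should be justified. It comes from the copy of $U(\fg)$ generated by the $X^{\pm}_{i,0}$ and $H_{i,0}$; alternatively, you can bypass it by working with generalized weight spaces.
\item $Y^{0}$ preserves the $\fh$-weight spaces because of \eqref{Y1}, not \eqref{Y2}--\eqref{Y3}.
\item The identification $M_{\lambda(u)}\cong Y^{-}$ needs two facts besides Theorem~\ref{thm: pbw}. First, $Y^{0}$ is a polynomial algebra in the $H_{i,r}$, so $\chi\colon H_{i,r}\mapsto\lambda_{i,r}$ is a well-defined character. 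Second, $Y^{+}Y^{0}\subseteq Y^{0}Y^{+}$, which follows from \eqref{Y2}--\eqref{Y3} by induction on $r$. Together these give $I=Y^{-}\ker\chi+Y^{-}Y^{0}Y^{+}_{>0}$, where $Y^{+}_{>0}$ is the span of nonempty products of the $X^{+}_{i,r}$. PBW then shows that this is a complement to $Y^{-}$.
\end{enumerate}
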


\subsection{Main theorem}
\label{subsection: Main}
\begin{theorem}
 \label{thm: 3.5}
     The irreducible $Y(\fg)$-module $L_{\lambda(u)}$ is finite-dimensional if and only if there exist polynomials $P_{i}\in \mathbb{C}[u]$ such that 
     \[
    \lambda_i(u) =   \frac{P_{i}(u+d_{i})}{P_{i}(u)}
     \]
     in the sense that the right-hand side is the Laurent expansion of the left-hand side about $u=\infty$.
 \end{theorem}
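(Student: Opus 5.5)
We prove necessity first for $\fg=\fsl_2$ and then reduce the general case to it; sufficiency comes last, through tensor products of evaluation-type modules. Let $V=L_{\lambda(u)}$ be a finite-dimensional $Y(\fsl_2)$-module with highest-weight vector $v$, so that $h_r v=\lambda_r v$. The subalgebra generated by $e_0,f_0,h_0$ is a copy of $\fsl_2$. Hence $h_0$ acts on $v$ by a nonnegative integer $m$. By $\fsl_2$-theory we have $f_0^{m+1}v=0$ and $f_0^{m}v\neq 0$.

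For each $k$, the vector $e_1^k f_0^k v$ has weight $\lambda_0$. Every weight vector of this weight is a multiple of $v$, since $V=Y^-Y^0Y^+v=Y^-v$ by Corollary \ref{cor: triangular}. So we may write $e_1^kf_0^kv=c_k v$. Apply Proposition \ref{prop: 2.8} with $k=m$. The left side vanishes because $f_0^{m+1}v=0$. The term $f_0e_1^mf_0^mv=c_m f_0 v$ and the term $e_1^{m-1}f_0^mv$ (a multiple of $f_0v$) are both proportional to $f_0v$. The middle term is $\tfrac{m(m+1)}{2}[h_1,e_1^{m-1}f_0^m]v$, which is again a multiple of $f_0 v$.

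We then pair this identity with $e_0$, or equivalently run the same recursion for every $k\le m$. Together with the relations $[e_r,f_s]=h_{r+s}$ and (Y3), this should give a recursion expressing the $c_k$ through the $\lambda_r$. The aim is the statement that the generating series $\sum_r\lambda_r u^{-r-1}$ is determined by finitely many data, namely the numbers $c_k$ for $k\le m$. Concretely, I expect to show that $\lambda(u)$ is rational of the form $P(u+1)/P(u)$ with $\deg P=m$. The roots of $P$ should be recovered from the polynomial whose coefficients are (up to normalization) the $c_k$, the standard "$\Lambda$-polynomial" argument of Chari--Pressley translated to the Yangian.

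\textbf{This bookkeeping is the main obstacle.} Specifically, we must show that the $c_k$ satisfy a determinant-free recursion and that the resulting series has exactly the form $P(u+1)/P(u)$ rather than an arbitrary rational function. My first attempt is to compute $e_1^k f_0^k v$ as $k!$ times an elementary symmetric polynomial in the "roots", proving the identity by induction on $k$ using Proposition \ref{prop: 2.8}.

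For general $\fg$, necessity follows by applying this argument to $Y_i\cdot v$. By Proposition \ref{prop: 2.10} this is a finite-dimensional highest-weight module over $Y(\fsl_2)$. The rescaling $\psi_i$ converts the shift $1$ into $d_i$ and gives $\lambda_i(u)=P_i(u+d_i)/P_i(u)$. To pass from $Y_i v$ to its irreducible quotient, note that the highest weight is unchanged.

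For sufficiency we first treat $\fsl_2$. Evaluation modules $V_m(a)$, pulled back along the evaluation homomorphism $Y(\fsl_2)\to U(\fsl_2)$ (with shift $a$), are finite-dimensional and have highest weight $P(u+1)/P(u)$, where $P$ is a string $(u-a)(u-a-1)\cdots$. Factor an arbitrary $P$ into strings. The tensor product of the corresponding evaluation modules, formed via the coproduct, is finite-dimensional. In it, the tensor product of the highest vectors is primitive modulo the ordering issues, and its $H(u)$-eigenvalue is the product of the eigenvalues. The irreducible subquotient generated by this vector is then $L_{\lambda(u)}$. For general $\fg$, instead of evaluation modules we would use fundamental representations: the known finite-dimensional modules realizing each $P_i=u-a$, as in \cite{Chari1995}. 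Taking tensor products of these, the same highest-weight argument applies.
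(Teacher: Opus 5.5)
You have a genuine gap. Your reduction from $Y(\fg)$ to $Y_i\cong Y(\fsl_2)$ and your tensor-product argument for sufficiency are in line with the paper. But the necessity direction for $Y(\fsl_2)$, which is the actual content of the theorem, is not proved: you set it up and then call the decisive step ``the main obstacle''. The setup is also wrong at a key point. The vectors $e_1^{m-1}f_0^mv$ and $[h_1,e_1^{m-1}f_0^m]v$ lie in the $h_0$-weight space of weight $\lambda_0-2$. That space is $\spann\{f_rv\}_{r\geq 0}$, not $\bc f_0v$. Indeed $[h_1,f_i]=-2f_{i+1}-2f_ih_0+2f_i$ raises the index each time $h_1$ passes an $f$, and already $e_1f_0^2v=2(\lambda_1-\lambda_0+1)f_0v-2f_1v$. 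So Proposition~\ref{prop: 2.8} at $k=m$ does not collapse to a scalar multiple of $f_0v$. It gives a vector identity $\sum_{i=0}^m\alpha_{i,m}f_iv=0$ (Lemma~\ref{prop: 4}). Pairing this with $e_0$, as you suggest, gives nothing. Once the $\alpha_{i,m}$ are identified with the coefficients $p_i$ of Proposition~\ref{prop: 3.7}, it returns $\sum_{t=0}^m p_t\lambda_t=0$. That is just the $u^{-1}$-coefficient of $P(u+1)-P(u)=(\lambda(u)-1)P(u)$, so it holds automatically. Likewise, the scalars $c_k$ with $k\leq m$ involve only $\lambda_0,\dots,\lambda_m$, so they cannot by themselves constrain the infinite tail of $\lambda(u)$.

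What is missing is a mechanism that produces infinitely many constraints. The paper obtains it as follows.
\begin{itemize}
\item Iterating Proposition~\ref{prop: 2.8}, using $e_1f_iv=\lambda_{i+1}v$ and the formula for $[h_1,f_i]$, gives an explicit recursion for the $\alpha_{i,k}$.
\item Rescaled into $Q_k(x)$, this yields $xQ_n(x+1)=(x+n)Q_n(x)+L(Q_n)(x)$ at $k=n=\lambda_0$, which is precisely recursion \eqref{(1)}.
\item Hence $e_1^nf_0^{n+1}v=(-1)^nn!(n+1)!\sum_{i=0}^np_if_iv$ (Proposition~\ref{prop: 3.10}). Since $f_0^{n+1}v=0$, this vector is zero.
\item Applying $e_r$ for \emph{every} $r\geq1$ gives $\sum_{t}p_t\lambda_{t+r}=0$. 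By \eqref{(2)} this forces $p_{-r}=0$ inductively, so the Laurent series $P$ is a polynomial.
\end{itemize}
If you want to stay with your scalars $c_k$ in Chari--Pressley style, you would instead need two things. First, the vanishing $c_k=0$ for all $k>m$, which follows from $f_0^kv=0$. Second, a closed generating-function formula expressing every $c_k$ through $\lambda(u)$. Neither is in your proposal.

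A smaller point: for general $\fg$ your sufficiency argument rests on the existence of finite-dimensional fundamental modules. As the paper remarks, this needs deep external input for $E_7,E_8$. The paper's main sketch instead bounds $\alpha_i$-strings and weight spaces directly (Lemma~\ref{lemma: 4.6}).
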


This theorem is formulated in \cite{Drinfeld1987} as considered for $Y(\fsl_2)$ in \cite{Tarasov1984}, \cite{Tarasov1985} and that it can be deduced from $Y(\fsl_2)$-case for an arbitrary $Y(\fg)$.
We will give a direct proof for $Y(\fsl_2)$ which only uses defining relations, analogous to the case of $U_q(\hat \fsl_2)$ given in \cite{Pressley1991}. Then we discuss a proof for $Y(\fg)$, which is not written explicitly in the literature.\footnote{However, there is a relatively full proof in the lecture notes by S. Gautam \cite{gautam}. Note that in the case of $Y(\fsl_2)$ these lecture notes use Molev's approach.}

\subsection{Drinfeld polynomial for \texorpdfstring{$Y(\fsl_{2})$}{Y(sl2)}}
\label{subsection: Drinfeld}

\begin{prop}
\label{prop: 3.7}
For any $\lambda(u) \in \mathbb{C}[[u^{-1}]]$ such that $\lambda(u) = 1 + \lambda_0 u^{-1}+ \ldots$, $\lambda_0 = n  \in \mathbb{Z}_{\geq 0}$ there exists unique $P(u) = u^n +p_{n-1} u^{n-1} + \ldots + p_0 + p_{-1} u^{-1} + \ldots \in \mathbb{C}((u^{-1}))$ such that
$\lambda(u) = \dfrac{P(u+1)}{P(u)}$.
Moreover, 
\begin{equation}
p_{r}=\frac{1}{r-n}\sum_{t=r+1}^{n}p_{t} \left( \lambda_{t-r}- \binom{t}{r-1}
\right)
\label{(1)}
\end{equation}
$r=0,1,...,n-1$ and
$\binom{t}{-1} =0$
for all $t$. 

\begin{equation}
p_{-r}=\frac{1}{(r+n)}\left[\sum_{t=1}^{r-1}p_{-t} \left( (-1)^{r-t-1} \binom{r}{r+1-t}
-\lambda_{r-t} \right) -\sum_{t=0}^{n}p_{t}\lambda_{t+r}\right]
\label{(2)}
\end{equation} for $r \geq 1$.

\end{prop}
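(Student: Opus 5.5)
The plan is to rewrite the identity $\lambda(u)=P(u+1)/P(u)$ as the equivalent linear equation
\[
P(u+1)-P(u)=\bigl(\lambda(u)-1\bigr)P(u)
\]
in $\mathbb{C}((u^{-1}))$. We then compare coefficients of $u^{m}$ for $m=n-1,n-2,\ldots$ in turn.

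First, all operations are legitimate in $\mathbb{C}((u^{-1}))$. The series $P(u)$ has bounded-above degree. Each shifted power $(u+1)^{t}$, for $t\in\mathbb{Z}$, is expanded as $\sum_{k\ge 0}\binom{t}{k}u^{t-k}$, with generalized binomial coefficients when $t<0$. Hence $P(u+1)$ is a well-defined element of $\mathbb{C}((u^{-1}))$, and every coefficient of a product is a finite sum. Since $P$ is monic of degree $n$, it is invertible in $\mathbb{C}((u^{-1}))$, so the quotient form and the linear form of the equation are equivalent. We write $\lambda(u)-1=\sum_{s\ge 1}\lambda_{s-1}u^{-s}$ and index coefficients as in the statement, with $p_n=1$.

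Next, compare coefficients. Both sides have degree at most $n-1$. At $u^{n-1}$ the left side gives $n\,p_n=n$ and the right side gives $\lambda_0 p_n$. These agree exactly because $\lambda_0=n$; this is where the hypothesis enters.

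For $0\le r\le n-1$, take the coefficient of $u^{r-1}$.
\begin{itemize}
\item On the left, $p_r(u+1)^r$ contributes $r\,p_r$, and each $p_t$ with $t>r$ contributes $\binom{t}{r-1}p_t$. Terms with $t<r$ do not reach degree $r-1$ with a nonzero coefficient; for $r=0$ this uses the convention $\binom{t}{-1}=0$.
\item On the right, $p_r$ pairs with $\lambda_0=n$, and $p_t$ with $t>r$ pairs with the coefficient indexed by $t-r$.
\end{itemize}
Rearranging gives
\[
(r-n)p_r=\sum_{t=r+1}^{n}p_t\Bigl(\lambda_{t-r}-\binom{t}{r-1}\Bigr),
\]
which is \eqref{(1)}. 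Since $r-n\neq 0$, this determines $p_r$ from the $p_t$ with $t>r$.

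The same comparison at $u^{-r-1}$ for $r\ge 1$ produces \eqref{(2)}. Now the coefficient of the unknown $p_{-r}$ is $-r-n$. The contributions of $p_{-t}$ with $1\le t<r$ come from expanding $(u+1)^{-t}$, giving the signed binomials $(-1)^{r-t-1}\binom{r}{r+1-t}$ (up to reindexing). The contributions of $p_t$ with $0\le t\le n$ pair with $\lambda_{t+r}$. Since $r+n\neq 0$, this again solves for $p_{-r}$ in terms of previously determined coefficients.

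Existence and uniqueness then follow by induction on decreasing degree. Any solution must satisfy this triangular system, and the system always has exactly one solution. The system is also sufficient: every coefficient of the difference of the two sides is one of the equations above.

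The main obstacle is purely bookkeeping: tracking the binomial coefficients from $(u+1)^{-t}$ for the negative-index terms, so that the signs and indices in \eqref{(2)} come out exactly as stated. I would handle it by first writing the general coefficient of $u^{m}$ in $P(u+1)$ as $\sum_{t\ge m}p_t\binom{t}{t-m}$, valid for all integers $t$, and then specializing to $m=-r-1$. The conceptual content, a nonzero diagonal coefficient $r-n$ for each unknown together with the consistency at the top degree given by $\lambda_0=n$, is straightforward.
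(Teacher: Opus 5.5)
Your proposal is correct and follows the paper's proof. The paper compares coefficients in $P(u+1)=P(u)\lambda(u)$ and then reindexes, which is the same computation as your coefficient comparison at $u^{r-1}$ and $u^{-r-1}$ in the equivalent form $P(u+1)-P(u)=(\lambda(u)-1)P(u)$; your explicit observations that the top coefficient forces $\lambda_0=n$ and that the diagonal coefficients $r-n$ and $-(r+n)$ never vanish supply the existence-and-uniqueness argument that the paper leaves implicit.
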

\begin{proof}
Consider an equation on $P(u)$:
\[
P(u+1)=P(u)\lambda(u)
\] 
Coefficientwise, it gives the following equations ($p_n=1)$:
\[
\sum_{t=r}^{n}p_{t} \binom{t}{r}
=p_{r}+\sum_{t=r+1}^{n}p_{t}\lambda_{t-r-1}
\]
\[
\sum_{t=1}^{r}p_{-t} \binom{r-1}{r-t}
(-1)^{r-t}=p_{-r}+p_{-r+1}\lambda_{0}+p_{-r+2}\lambda_{1}+...+p_{-1}\lambda_{r-2}+\sum_{t=0}^{n}p_{t}\lambda_{t+r-1}
\]

After a change of index, we obtain the result.
\end{proof}

\subsection{Proof of if part for \texorpdfstring{$Y(\fsl_{2})$}{Y(sl2)}.}
Suppose that $\dim L(\lambda(u)) < \infty$. In this case there exists $n \in \mathbb{Z}_{\geq 0}$ such that $\lambda_0 = n$, because the highest weight vector with respect to $Y(\fsl_2)$ is at the same time the highest weight vector with respect to $U(\fsl_2)$. Hence from Proposition \ref{prop: 3.7} we know that $\lambda(u) = \dfrac{P(u+1)}{P(u)}$ for some $P(u) = u^n +p_{n-1} u^{n-1} + \ldots + p_0 + p_{-1} u^{-1} + \ldots \in \mathbb{C}((u^{-1}))$. It is enough to show that $p_{-r} = 0$ for all $r \geq 0$.
\begin{lemma}
$$\sum_{i=0}^n p_i (f_i v) =0 $$ implies $p_{-r} = 0$ for any $r \geq 0$.
 \end{lemma}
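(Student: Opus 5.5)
The plan is to turn the vector identity $\sum_{i=0}^n p_i f_i v = 0$ into scalar identities among the $\lambda$'s by applying the raising operators $e_s$. Then we feed these scalar identities into the recursion \eqref{(2)} of Proposition~\ref{prop: 3.7} and conclude by induction on $r$. Here $v$ is the highest-weight vector of $L(\lambda(u))$, so $e_s v = 0$ for all $s \ge 0$. Since $h_r$ is the coefficient of $u^{-r-1}$ in $H(u)$ and $\lambda_r$ the coefficient of $u^{-r-1}$ in $\lambda(u)$, we also have $h_r v = \lambda_r v$ for all $r \ge 0$.

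\emph{Step 1 (scalar identities).} Fix $s \ge 0$ and apply $e_s$ to the relation. By \eqref{Y4}, $e_s f_i v = [e_s, f_i]v + f_i e_s v = h_{s+i} v = \lambda_{s+i} v$. Hence
\[
0 = e_s \sum_{i=0}^n p_i f_i v = \Bigl(\sum_{i=0}^n p_i \lambda_{i+s}\Bigr) v .
\]
Since $v \neq 0$, this gives $\sum_{t=0}^n p_t \lambda_{t+s} = 0$ for every $s \ge 0$.

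\emph{Step 2 (induction).} Formula \eqref{(2)} expresses $p_{-r}$, for $r \ge 1$, as $\frac{1}{r+n}$ times a linear combination of $p_{-1}, \dots, p_{-(r-1)}$, minus $\frac{1}{r+n}\sum_{t=0}^n p_t \lambda_{t+r}$. The prefactor is harmless because $r+n \ge 1$, so we never divide by zero.
\begin{itemize}
\item For $r = 1$ the first sum in \eqref{(2)} is empty, so $p_{-1} = -\frac{1}{n+1}\sum_{t=0}^n p_t\lambda_{t+1}$. This vanishes by Step 1 with $s = 1$.
\item Assume $p_{-1} = \dots = p_{-(r-1)} = 0$. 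Then the first sum in \eqref{(2)} vanishes, and $p_{-r} = -\frac{1}{r+n}\sum_{t=0}^n p_t \lambda_{t+r}$, which is zero by Step 1 with $s = r$.
\end{itemize}
Hence all the negative-index coefficients $p_{-1}, p_{-2}, \dots$ of $P(u)$ vanish. This is what ``$p_{-r}=0$ for $r \ge 0$'' in the statement must mean, since $p_0$ is the constant term of $P$ and is not required to vanish. It follows that $P(u)$ is a polynomial.

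\emph{Main obstacle.} The only delicate point is bookkeeping of index conventions. One must match $H(u) = 1 + \sum_{r\ge1} h_{r-1}u^{-r}$ with $\lambda(u)$ so that $h_{s+i}$ acts on $v$ by exactly $\lambda_{s+i}$, the same $\lambda$-index appearing in \eqref{(2)}. One must also check that the off-diagonal sum in \eqref{(2)} involves only $p_{-t}$ with $1 \le t \le r-1$, so that the induction closes. Once the indices are aligned as above, the argument is a direct triangular elimination.
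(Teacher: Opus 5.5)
Your proposal is correct and follows essentially the paper's proof: apply $e_r$ to the relation, use (Y4) and $h_{t+r}v=\lambda_{t+r}v$ to get $\sum_{t=0}^n p_t\lambda_{t+r}=0$, and conclude $p_{-r}=0$ by induction on $r$ via formula~\eqref{(2)} of Proposition~\ref{prop: 3.7}. Your bookkeeping is more careful than the paper's in two places. The paper writes $\bigl(\sum_t p_t\lambda_{t+r}\bigr)v=p_{-r}v$, whereas under the inductive hypothesis the scalar is really $-(r+n)p_{-r}$. The paper also states the conclusion for $r\ge 0$, but only $r\ge 1$ is meant, as you correctly observe.
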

\begin{proof}
The proof is by induction on $r$.
For $r = 1$ we have
$$0 = e_1\cdot \left(\sum_{i=0}^n p_i (f_i \cdot v)\right) =  \left( \sum_{t=0}^{n}p_{t}h_{t+1} \right) v = \left( \sum_{t=0}^{n}p_{t}\lambda_{t+1} \right) v = p_{-1} v$$
For arbitrary $r$, using the recurrence relation, we get
$$0 = e_r\cdot \left(\sum_{i=0}^n p_i (f_i \cdot v)\right) =  \left( \sum_{t=0}^{n}p_{t}h_{t+r} \right) v = \left( \sum_{t=0}^{n}p_{t}\lambda_{t+r} \right) v = p_{-r} v  $$
\end{proof}
\begin{lemma}
\label{prop: 4}
    \[
    e_{1}^{k}f_{0}^{k+1}v=\sum_{i=0}^{k}\alpha_{i,k}f_{i}v
    \] where
    $\alpha_{0,0} = 1$ and 
     \[
    \alpha_{0,k}=(k+1)\left[\sum_{i=0}^{k-1}\alpha_{i,k-1}\lambda_{i+1}+k(k-\lambda_{0})\alpha_{0,k-1} \right];
    \]
    \[
    \alpha_{i,k}=k(k+1)\left[(k-\lambda_{0})\alpha_{i,k-1}-\alpha_{i-1,k-1}\right];
    \]
    \[
    \alpha_{k,k}=-k(k+1)\alpha_{k-1,k-1}.
    \]
    
\end{lemma}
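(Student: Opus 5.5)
Induction on $k$ is the natural route, using Proposition~\ref{prop: 2.8} to pass from $k-1$ to $k$. For $k=0$ the claim is $f_0v=\alpha_{0,0}f_0v$, which is trivial. The first job is to record how $e_1$, $h_1$ act on vectors $f_iv$, where $v$ is the highest weight vector. Since $e_sv=0$ and $h_sv=\lambda_sv$, relation \eqref{Y4} gives
\[
e_1f_iv=h_{i+1}v=\lambda_{i+1}v .
\]
Relation \eqref{Y3} for $\fsl_2$ (with $d=1$, $a=2$, minus sign) reads $[h_{r+1},f_s]-[h_r,f_{s+1}]=-(h_rf_s+f_sh_r)$. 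With $r=0$ and $[h_0,f_s]=-2f_s$, this yields
\[
[h_1,f_s]=[h_0,f_{s+1}]-(h_0f_s+f_sh_0)=-2f_{s+1}-(2\lambda_0-2)f_s-\ldots
\]
on vectors of weight $\lambda_0-2$. More precisely, on $f_sv$ we have $h_0f_sv=(\lambda_0-2)f_sv$, so
\[
h_1f_sv=\lambda_1f_sv-2f_{s+1}v-(2\lambda_0-2)f_sv .
\]

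Now assume the statement for $k-1$, i.e.\ $w:=e_1^{k-1}f_0^{k}v=\sum_{i=0}^{k-1}\alpha_{i,k-1}f_iv$. Note that $w$ lies in the weight space of $h_0$-eigenvalue $\lambda_0-2$. We evaluate the three terms of Proposition~\ref{prop: 2.8} in turn.

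\emph{Third term.} It is simply $(k-1)k(k+1)w$, which is already in the required form.

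\emph{Second term.} It is $\frac{k(k+1)}2\,(h_1w-\lambda_1w)$, since $e_1^{k-1}f_0^{k}h_1v=\lambda_1w$. By the formula for $h_1f_sv$ this equals
\[
\frac{k(k+1)}2\sum_i\alpha_{i,k-1}\bigl(-2f_{i+1}v-2(\lambda_0-1)f_iv\bigr).
\]
This produces the shift $\alpha_{i-1,k-1}$ with coefficient $-k(k+1)$, which is exactly the top coefficient $\alpha_{k,k}=-k(k+1)\alpha_{k-1,k-1}$.

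\emph{First term.} This is $(k+1)f_0e_1^kf_0^kv$, and it is the delicate one. The vector $e_1^kf_0^kv$ has weight $\lambda_0$, hence is a multiple $c_kv$ of $v$, since the weight space of $\lambda_0$ is spanned by $v$ in a highest-weight module. We compute $c_k=e_1w=\sum_i\alpha_{i,k-1}\lambda_{i+1}$. The first term is therefore $(k+1)\bigl(\sum_i\alpha_{i,k-1}\lambda_{i+1}\bigr)f_0v$, which matches the first summand of $\alpha_{0,k}$.

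Collecting coefficients of $f_iv$ for $1\le i\le k-1$ gives
\[
k(k+1)\bigl[(k-1)/1\cdot\ldots\bigr],
\]
and here I would check carefully that $(k-1)k(k+1)-k(k+1)(\lambda_0-1)=k(k+1)(k-\lambda_0)$. This identity holds, so the middle coefficients come out as $k(k+1)[(k-\lambda_0)\alpha_{i,k-1}-\alpha_{i-1,k-1}]$. For $i=0$ the same bracket with the extra $c_k$ term gives $(k+1)[c_k+k(k-\lambda_0)\alpha_{0,k-1}]$, matching the stated formula.

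The main obstacle is getting the signs and normalization in the $h_1f_s$ commutator right from \eqref{Y3}, since the claimed recursion hinges on the combination $k-\lambda_0$. Also needed is a justification that $e_1^kf_0^kv\in\bc v$. I would handle the latter via the triangular decomposition of Corollary~\ref{cor: triangular}: $Y^-$ lowers weight strictly, so the $\lambda_0$-weight space of $Y(\fg)v=Y^-v$ is $\bc v$.
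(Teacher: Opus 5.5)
Your proposal is correct and is essentially the paper's proof: induction on $k$ via Proposition~\ref{prop: 2.8}, evaluating $f_0e_1^kf_0^kv$ through $e_1f_iv=\lambda_{i+1}v$ and the bracket term through $[h_1,f_i]=-2f_{i+1}-2f_ih_0+2f_i$, then collecting coefficients using $(k-1)-(\lambda_0-1)=k-\lambda_0$. The weight-space argument for $e_1^kf_0^kv\in\bc v$ is superfluous, since $e_1^kf_0^kv=e_1w$ is computed directly from the induction hypothesis, which you also do.
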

\begin{proof}
For $k=0$ we have $f_0 v = 1 \cdot f_0 v$.
    We prove the statement by induction on $k$. 
    By Proposition \ref{prop: 2.8} we have:
    $$e_{1}^{k}f_{0}^{k+1}v=(k+1)f_{0}e_{1}^{k}f_{0}^{k}v+\frac{k(k+1)}{2}[h_{1}, e_{1}^{k-1}f_{0}^{k}]v+(k-1)k(k+1)e_{1}^{k-1}f_{0}^{k}v$$
    Moreover
    
    \[
    \begin{split}
    f_0e_{1}^{k}f_{0}^{k}v=f_0e_{1} \Big( \sum_{i=0}^{k-1}\alpha_{i,k-1}f_{i}v \Big)=f_0\sum_{i=0}^{k-1}\alpha_{i,k-1}e_{1}f_{i}v=\Big(\sum_{i=0}^{k-1}\alpha_{i,k-1}\lambda_{i+1} \Big) f_0 v
    \end{split}
    \]

    \[
    \begin{split}
        [h_{1}, e_{1}^{k-1}f_{0}^{k}]v=h_{1}e_{1}^{k-1}f_{0}^{k}v-e_{1}^{k-1}f_{0}^{k}h_{1}v=h_{1}\sum_{i=0}^{k-1}\alpha_{i,k-1}f_{i}v-\lambda_{1}\sum_{i=0}^{k-1}\alpha_{i, k-1}f_{i}v=\\
        =\sum_{i=0}^{k-1}\alpha_{i,k-1}h_{1}f_{i}v-\lambda_{1}\sum_{i=0}^{k-1}\alpha_{i, k-1}f_{i}v=\sum_{i=0}^{k-1}\alpha_{i,k-1}[h_{1}, f_{i}]v
    \end{split}
    \]
    Using the commutation relation
    \[
    \begin{split}
        [h_{1}, f_{i}]-[h_{0}, f_{i+1}]=-h_{0}f_{i}-f_{i}h_{0} \Leftrightarrow [h_{1}, f_{i}]=-2f_{i+1}-2f_{i}h_{0}+2f_{i}
    \end{split}
    \]
we obtain
    \[
    \begin{split}
        [h_{1}, e_{1}^{k-1}f_{0}^{k}]v=\sum_{i=0}^{k-1}\alpha_{i,k-1}\Big( -2f_{i+1}-2f_{i}h_{0}+2f_{i} \Big)v=\\
        =-2\sum_{i=0}^{k-1}\alpha_{i,k-1}f_{i+1}v-2\lambda_{0}\sum_{i=0}^{k-1}\alpha_{i,k-1}f_{i}v+2\sum_{i=0}^{k-1}\alpha_{i,k}f_{i}v=\\
        =2(1-\lambda_{0})\sum_{i=0}^{k-1}\alpha_{i,k-1}f_{i}v-2\sum_{i=0}^{k-1}\alpha_{i,k-1}f_{i+1}v
    \end{split}
    \]
It implies
\[
\begin{split}
     e_{1}^{k}f_{0}^{k+1}v=\Big( (k+1) \sum_{i=0}^{k-1}\alpha_{i,k-1}\lambda_{i+1} + k(k+1)(1-\lambda_{0})\alpha_{0, k-1}+(k-1)k(k+1)\alpha_{0, k-1} \Big) f_{0}v+\\
    +\sum_{i=1}^{k-1}\Big( k(k+1)(1-\lambda_{0})\alpha_{i, k-1}-k(k+1)\alpha_{i-1, k-1}+(k-1)k(k+1)\alpha_{i, k-1} \Big) f_{i}v
    -k(k+1)\alpha_{k-1, k-1}f_{k}v = \sum_{i=0}^k \alpha_{ik} f_i v
\end{split}
\]

\end{proof}

Now let $\lambda_0=n$ and compute $\alpha_{i,n}, i =0, \ldots, n$.
\begin{prop}
\label{prop: 3.10}
    \[
    e_{1}^{n}f_{0}^{n+1}v= (-1)^n n! (n+1)! \sum_{i=0}^{n} p_i f_{i}v
    \]
\end{prop}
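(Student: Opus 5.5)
The plan is to solve the recursion of Lemma~\ref{prop: 4} explicitly at $\lambda_0=n$ and match the result with the coefficients $p_0,\dots,p_n$ of Proposition~\ref{prop: 3.7}.

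\textbf{Normalization.} The last formula of Lemma~\ref{prop: 4} with $\alpha_{0,0}=1$ gives $\alpha_{k,k}=(-1)^k k!(k+1)!$. So I set
\[
\beta_{i,k}=\frac{\alpha_{i,k}}{(-1)^k k!(k+1)!}.
\]
Substituting $\lambda_0=n$, the recursion of Lemma~\ref{prop: 4} becomes
\[
\beta_{k,k}=1,\qquad \beta_{i,k}=\beta_{i-1,k-1}+(n-k)\beta_{i,k-1}\quad (1\le i\le k-1),
\]
\[
\beta_{0,k}=(n-k)\beta_{0,k-1}-\frac1k\sum_{i=0}^{k-1}\beta_{i,k-1}\lambda_{i+1}.
\]
The statement is then equivalent to $\beta_{i,n}=p_i$ for $0\le i\le n$.

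\textbf{Generating polynomials.} Define $B_k(u)=\sum_{i=0}^k\beta_{i,k}u^i$, a monic polynomial of degree $k$. The recursion says exactly that
\[
B_k(u)=(u+n-k)B_{k-1}(u)+c_k,\qquad c_k=-\frac1k\sum_{i=0}^{k-1}\beta_{i,k-1}\lambda_{i+1},
\]
with $B_0=1$. Iterating gives
\[
B_n(u)=\sum_{k=0}^{n}c_k\prod_{j=k+1}^{n}(u+n-j),\qquad c_0:=1,
\]
so $B_n$ is expanded in the falling-factorial-type basis $\prod_{m=0}^{n-k-1}(u+m)$. Each basis element behaves simply under $u\mapsto u+1$, since the ratio of its shifted and unshifted forms is $(u+n-k)/u$.

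\textbf{Identification with $P$.} By Proposition~\ref{prop: 3.7}, the polynomial part $p_n u^n+\dots+p_0$ of $P$ is the unique monic degree-$n$ polynomial $Q$ satisfying the triangular system obtained from $Q(u+1)=\lambda(u)Q(u)$ by comparing the coefficients of $u^{r}$ for $r=0,\dots,n-1$. These are the equations that produce formula~\eqref{(1)}, and they involve only $p_0,\dots,p_n$ (the $u^{r}$-coefficient with $r\ge0$ uses only the $p_t$ with $t\ge r$, so it sees no negative-index coefficients). The system has a unique solution, because the $u^{r-1}$ equation determines $p_{r}$ with the nonzero factor $r-n$.

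So it suffices to check that $B_n$ satisfies these $n$ equations. I would do this by induction on $k$, proving a statement about each $B_k$, roughly: the coefficients of $u^{m}$, $0\le m\le k-1$, in
\[
(u+n-k)\,B_k(u+1)-(u+n-k)\dots
\]
(more precisely, in a suitably shifted version of $B_k(u+1)-\lambda(u)B_k(u)$) vanish. The inductive step then uses $B_k=(u+n-k)B_{k-1}+c_k$.

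\textbf{Main obstacle.} The hard part is showing that the constant $c_k$, defined through $\sum_i\beta_{i,k-1}\lambda_{i+1}$, is exactly what kills the new lowest-order coefficient. This sum is the $u^{-1}$-type pairing of $B_{k-1}$ against $\lambda$, i.e.\ the quantity $e_1\cdot$ applied to $\sum\beta_{i,k-1}f_iv$. I would first try to phrase the invariant as ``$\operatorname{Res}_{u=\infty}$-type pairings $\langle \lambda(u)B_k(u)\,,\,\text{lower basis elements}\rangle$ vanish'' and check it directly on small $n$ ($n=1,2$) to pin down the right shift.

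If this becomes unwieldy, the fallback is brute force: plug the explicit expansion of $B_n$ into formula~\eqref{(1)} and verify it coefficientwise with binomial identities.
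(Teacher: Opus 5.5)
\textbf{There is a genuine gap.} Your setup is the paper's. Your $\beta_{i,k}$ and $B_k$ are its $b_{i,k}$ and $Q_k$. Your recursion $B_k=(u+n-k)B_{k-1}+c_k$, with $c_k=-\frac1k l(B_{k-1})$ and $l(\sum c_iu^i)=\sum c_i\lambda_{i+1}$, is its \eqref{qrec}. The paper also finishes by matching the top polynomial with the triangular system \eqref{(1)}. What is missing is the inductive statement about $B_k$, which is the entire content of the proof, and the shape you guess for it would not work.

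The paper introduces $L:\bc[u]\to\bc[u]$, $u^r\mapsto\sum_{i=1}^r\lambda_iu^{r-i}$, so that $L(uf)=uL(f)+l(f)$. It then proves by induction on $k$
\[
(u+n-k)\,B_k(u+1)=(u+n)\,B_k(u)+L(B_k)(u),\qquad 0\le k\le n .
\]
Using $\lambda_0=n$, the right-hand side $(u+n)B_k+L(B_k)$ is exactly the polynomial part of $u\lambda(u)B_k(u)$. So the identity says that the polynomial part of $u\bigl(B_k(u+1)-\lambda(u)B_k(u)\bigr)$ equals $-(n-k)B_k(u+1)$. For $k<n$ this is nonzero, since its leading coefficient is $-(n-k)$. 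Hence for intermediate $k$ the right invariant is an exact identity with different factors on the two sides, not a vanishing of low-order coefficients of some shifted $B_k(u+1)-\lambda(u)B_k(u)$. It becomes a vanishing statement only at $k=n$, so testing small $n$ for ``the right shift'' of that difference would not find it.

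With this identity, the step you flagged as the main obstacle is immediate:
\begin{itemize}
\item First, $L(B_k)=(u+n-k)L(B_{k-1})+l(B_{k-1})$.
\item Substitute the hypothesis at $k-1$, multiplied by $(u+n-k)$. The difference of the two sides reduces to $-kc_k-l(B_{k-1})$.
\item This vanishes exactly because $c_k=-\frac1k l(B_{k-1})$. That is the mechanism by which $c_k$ kills the new term.
\item At $k=n$, the coefficient of $u^r$ ($0\le r\le n-1$) in $uB_n(u+1)=(u+n)B_n(u)+L(B_n)(u)$ is precisely \eqref{(1)}. Together with $\beta_{n,n}=1=p_n$, this gives $\beta_{i,n}=p_i$.
\end{itemize}

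Your brute-force fallback is not a real alternative. The expansion $B_n=\sum_k c_k\prod_{j>k}(u+n-j)$ is not a closed form, because each $c_k$ is itself defined through $l(B_{k-1})$.

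Finally, there is a small indexing slip. The coefficients $p_0,\dots,p_{n-1}$ are determined by the coefficients of $u^{-1},\dots,u^{n-2}$ in $P(u+1)=\lambda(u)P(u)$, not by those of $u^0,\dots,u^{n-1}$. The $u^{n-1}$ equation only says $\lambda_0=n$. In the $u^{-1}$ equation, $p_{-1}$ appears on both sides and cancels. Equivalently, they are determined by the coefficients of $u^0,\dots,u^{n-1}$ of $u\bigl(P(u+1)-\lambda(u)P(u)\bigr)$, which is exactly what the $k=n$ identity provides.
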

\begin{proof}
    Let $b_{i, k} = \dfrac{(-1)^k \alpha_{i,k}}{k!(k+1)!}$ and
    $Q_k(x) = \sum_{i=0}^k b_{i,k} x^k$. Note that $Q_0=1$.
    \begin{lemma}
    Recurrence relations on $\alpha_{i,k}$ are equivalent to  a recurrence relation
   \begin{equation} \label{qrec} Q_k(x) = (x+n-k)Q_{k-1}(x)-\dfrac{1}{k} l(Q_{k-1})\end{equation} where $l: \bc[x] \to \bc, \sum_{i=0}^m c_i x^i \mapsto \sum_{i=0}^m c_i \lambda_{i+1}$.
    \end{lemma}
\begin{proof}
Coefficient of $x^0$ is exactly first recurrence relation, coefficient of $x^r, 1 \leq r \leq k-1 $ is second and
coefficient of $x^k$ is third.
\end{proof}
Consider an operator $L: \bc[x] \to \bc[x], x^r \mapsto \sum_{i=1}^{r} \lambda_i x^{r-i}, 1 \mapsto 0$. Note that
$$L(xf) = xL(f)+l(f)$$ 
\begin{lemma}
\begin{eqnarray}(x+n-k)Q_k(x+1) = (x+n)Q_k(x) +L(Q_k)(x) \end{eqnarray}
\end{lemma}
\begin{proof}
We prove it by induction on $k$. For $k=0$ we have $x+n=x+n$.
Applying $L$ to (\ref{qrec}) we obtain
$$L(Q_k) = (x+n-k)L(Q_{k-1})+l(Q_{k-1})$$
Then
\begin{eqnarray*}
(x+n-k)Q_k(x+1) - (x+n)Q_k(x) - L(Q_k)(x) = \\ (x+n-k) \left[ (x+n-k)Q_{k-1}(x+1) - \frac{1}{k}l(Q_{k-1})\right] -(x+n)\left[(x+n-k)Q_{k-1}(x) - \frac{1}{k} l(Q_{k-1}) \right] \\ - (x+n-k)L(Q_{k-1})(x) - l(Q_{k-1}) = \\
(x+n-k)\left[ (x+n-k)Q_{k-1}(x+1) - (x+n)Q_{k-1}(x) - L(Q_{k-1}(x)  \right] = 0
\end{eqnarray*}
\end{proof}
For $k=n$ we have
$xQ_n(x+1) = (x+n)Q_n(x) + L(Q_n)(x)$. Note that $b_{n,n} = 1 = p_n$.
The coefficient of $x^r$ in this equality gives
$$b_{r-1,n} + \sum_{i=0}^{n-r} b_{r+i,n} \binom{r+i}{i+1}  =  b_{r-1,n}+n b_{r,n} + \sum_{i=1}^{n-r} b_{r+i,n} \lambda_i $$
hence
$$(r-n)b_{r,n} = \sum_{i=1}^{n-r} b_{r+i,n} \left(\lambda_i - \binom{r+i}{i+1} \right) = \sum_{t=r+1}^n b_{t,n}\left(\lambda_{t-r} - \binom{t}{r-1}\right) $$
This is exactly the recursion formula for $p_i$; hence $b_{i,n} = p_i$ and it completes the proof.
\end{proof}
Proposition \ref{prop: 3.10} implies that
$$\sum_{i=0}^{n}p_{i}f_{i}v=0.$$
and completes the proof.

\subsection{Proof of only if part}
Now suppose that $\lambda(u) = \dfrac{P(u+1)}{P(u)}$ for some monic polynomial $P(u)$ of degree $n$. 
If $P(u) = u-a$ then it is well-known that representation $\mathbb{C}^2(a) \simeq L_{\lambda(u)}.$ 

\begin{prop}
\label{prop: 3.12}
    Let $V$ and $W$ be finite-dimensional $Y(\fg)$-modules, and let $v \in V$, $w \in W$ be primitive vectors of weights $h_{V}$ and $h_{W}$ respectively. Then $v \otimes w$ is a primitive vector in $V \otimes W$ of weight $h_{V \otimes W}$, where 
    \[
    P_{V \otimes W}=P_{V}P_{W}
    \]
    and $P_{V \otimes W}$, $P_{V}$ and $P_{W}$ are the corresponding monic polynomials associated with the weights $h_{V \otimes W}$, $h_{V}$ and $h_{W}$.
\end{prop}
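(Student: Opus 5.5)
The plan is to work with the Hopf structure of $Y(\fg)$, which the statement implicitly uses (the action on $V\otimes W$ is through the coproduct $\Delta$). The paper's defining relations are in Drinfeld's ``new'' presentation, but $\Delta$ is explicit only in the $J$-presentation: $\Delta(x)=x\otimes 1+1\otimes x$ and $\Delta(J(x))=J(x)\otimes 1+1\otimes J(x)+\tfrac12[x\otimes 1,\Omega]$. So the first step is to translate $\Delta$ into the generators $X^{\pm}_{i,r},H_{i,r}$ only up to suitable ideals. Set $Y^{\pm}_{>}$ for the left ideal of $Y(\fg)$ generated by the $X^{\pm}_{j,s}$, graded by $Q$. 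The target is the following congruence, in the style of Chari--Pressley:
\begin{equation*}
\Delta\bigl(X^+_i(u)\bigr)\equiv X^+_i(u)\otimes 1+H_i(u)\otimes X^+_i(u),
\qquad
\Delta\bigl(H_i(u)\bigr)\equiv H_i(u)\otimes H_i(u).
\end{equation*}
Here the first congruence is modulo $\sum Y\otimes Y X^+$ summed over elements of strictly higher $Q$-weight in the second factor. The second is modulo $\sum_{\beta>0} Y X^-_{(\beta)}\otimes Y X^+_{(\beta)}$, where $X^{\pm}_{(\beta)}$ denotes elements of weight $\pm\beta$ in the subalgebras generated by the $X^\pm$.

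I will prove this congruence in two stages. First, in degrees $0$ and $1$ it is a direct computation from the $J$-formula. One uses $X^{\pm}_{i,0}=x^\pm_i$ and $H_{i,1}=J(h_i)+(\text{quadratic in }\fg)$, splits the Casimir $\Omega$ into its Cartan part and the parts $x^-_\alpha\otimes x^+_\alpha$ and $x^+_\alpha\otimes x^-_\alpha$, and reads off which terms survive modulo the ideals. Second, I extend to all $r$ by induction. By (Y3) with $j=i$, the element $\widetilde H_{i,1}=H_{i,1}-\tfrac12 H_{i,0}^2$ satisfies $[\widetilde H_{i,1},X^{\pm}_{i,r}]=\pm 2d_i X^{\pm}_{i,r+1}$. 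Applying $\Delta$ to this commutator and using the degree-one formulas gives the statement for $X^+_{i,r+1}$; one checks that the error ideals are stable under commutation with $\Delta(\widetilde H_{i,1})$ by weight considerations. For $H_{i,r}$ one writes $H_{i,r}=[X^+_{i,r},X^-_{i,0}]$ by (Y4) and uses the already established formula for $\Delta(X^+_{i,r})$.

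With the congruences in hand the proposition is quick. Since $v$ and $w$ are primitive, every $X^+$ kills them. All error terms in $\Delta(X^+_i(u))$ have an $X^+$ acting on $w$, so they vanish on $v\otimes w$. The main terms give $X^+_i(u)v\otimes w+H_i(u)v\otimes X^+_i(u)w=0$. Similarly, the error terms of $\Delta(H_i(u))$ end in $X^+_{(\beta)}$ with $\beta>0$ acting on $w$, so $H_i(u)(v\otimes w)=\lambda^V_i(u)\lambda^W_i(u)\,v\otimes w$. Writing $\lambda^V_i(u)=P_{V,i}(u+d_i)/P_{V,i}(u)$ and likewise for $W$, which is possible by the ``if'' direction of Theorem \ref{thm: 3.5} already proved (via Proposition \ref{prop: 2.10} for general $\fg$), the product is $(P_{V,i}P_{W,i})(u+d_i)/(P_{V,i}P_{W,i})(u)$. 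By the uniqueness in Proposition \ref{prop: 3.7}, the monic polynomial attached to $v\otimes w$ is therefore $P_{V,i}P_{W,i}$.

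The main obstacle is the second stage of the congruence: controlling $\Delta(X^+_{i,r})$ for all $r$ with the correct error ideals. The induction through $[\Delta(\widetilde H_{i,1}),\cdot\,]$ produces cross terms from the $x^-_\alpha\otimes x^+_\alpha$ part of $\Omega$. These must be shown to stay in ideals that still annihilate $v\otimes w$ after subsequent commutators. I would first try to carry this out by tracking $Q$-weights in each tensor factor. If that becomes unmanageable, I would fall back on the cited Chari--Pressley result for this congruence.
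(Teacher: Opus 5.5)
The paper gives no proof of this proposition. It states it as the standard Chari--Pressley result and uses it in the only-if part for $Y(\fsl_2)$, so there is no internal argument to compare with. Your proposal reconstructs the usual literature proof, and it goes through.

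The step you flag as the main obstacle is harmless. With $(x^+_\alpha,x^-_\alpha)=1$, the $J$-computation gives the exact identity
\[
\Delta(\widetilde H_{i,1})=\widetilde H_{i,1}\otimes 1+1\otimes \widetilde H_{i,1}-\sum_{\alpha>0}(\alpha_i,\alpha)\,x^-_\alpha\otimes x^+_\alpha .
\]
Let $\mathcal I=\sum_{\beta>\alpha_i}Y\otimes YX^+_{(\beta)}$, where $\beta>\alpha_i$ means $\beta-\alpha_i\in Q_+\setminus\{0\}$. This space is stable under the adjoint action of $\Delta(\widetilde H_{i,1})$, for two reasons. First, $[\widetilde H_{i,1},X^+_{j,s}]=d_ia_{ij}X^+_{j,s+1}$, so $\operatorname{ad}\widetilde H_{i,1}$ preserves each $X^+_{(\beta)}$. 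Second, the cross term only multiplies second factors on the left or right by $x^+_\alpha$, which raises the weight.

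Two points need care.

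\emph{Keep the displayed formula exactly, not modulo an ideal.} The part $\widetilde H_{i,1}\otimes1+1\otimes\widetilde H_{i,1}$ only sends $X^+_{i,r}\otimes1\mapsto 2d_iX^+_{i,r+1}\otimes1$ and $H_{i,a}\otimes X^+_{i,s}\mapsto 2d_iH_{i,a}\otimes X^+_{i,s+1}$. The new main term $H_{i,r}\otimes X^+_{i,0}$ of $\Delta(X^+_{i,r+1})$ comes only from the summand $\alpha=\alpha_i$ of the cross term acting on $X^+_{i,r}\otimes 1$. Summands with $\alpha>\alpha_i$ land in $\mathcal I$, and the remaining ones vanish, since then $x^-_\alpha$ is built from the $X^-_{j,0}$ with $j\neq i$.

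\emph{The $H$-congruence you actually obtain is weaker than the one you state.} Commuting with $\Delta(X^-_{i,0})$ gives it only modulo $Y\otimes YX^+$. This uses that elements of weight in $Q_+\setminus\{0\}$ lie in $YX^+$, by Corollary \ref{cor: triangular}. You do not get your refined ideal $\sum_\beta YX^-_{(\beta)}\otimes YX^+_{(\beta)}$, because the first factors in $\mathcal I$ are arbitrary. The weaker statement is, however, all your final step uses.

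Primitivity of $v\otimes w$ needs no computation at all. $\Delta$ commutes with $\operatorname{ad}\fh$, and $Y=Y^-Y^0+YX^+$. Hence the weight-$\alpha_i$ part of $Y\otimes Y$ lies in $YX^+\otimes Y+Y\otimes YX^+$, which kills $v\otimes w$. Likewise the weight-zero part lies in $Y^0\otimes Y^0$ modulo such terms, so $v\otimes w$ is automatically an eigenvector of every $H_{i,r}$. The $X^+$-congruence is only needed to compute the eigenvalue.

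Finally, invoking the if-direction of Theorem \ref{thm: 3.5} to write $\lambda^V_i,\lambda^W_i$ via polynomials is not circular. That direction is proved independently of this proposition.
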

If $P(u) = (u-a_1) \ldots (u-a_n)$ then we consider representation
$$\mathbb{C}^2(a_1) \otimes \ldots \mathbb{C}^2(a_n)$$
which by proposition above contains a primitive vector of weight $\lambda(u) = \frac{P(u+1)}{P(u)}$. This implies that there exists a finite-dimensional highest-weight representation of weight $\lambda(u)$; hence $L_{\lambda(u)}$ is also finite-dimensional. 

\section{Representations of \texorpdfstring{$Y(\fg)$}{Y(g)}}

\subsection{If part}
Suppose that $\dim L_{\lambda(u)}<\infty$ and $v \in L_{\lambda(u)}$ be a highest weight vector. Then using Proposition \ref{prop: 2.10} we see that the module $Y_i v$ is finite-dimensional and classification for $Y(\fsl_2)$ case implies that $\lambda_i(u) = \frac{P(u+d_i)}{P_i(u)}$ for some unital $P_i(u) \in \bc[u]$. The appearance of $d_i$ can be explained as follows:
If for fixed $i$
    $$X_{i,r}^{+}v=0, \quad H_{i,r}v=\lambda_{i,r-1}v$$ for any $r \geq 0$ and  $H_{i,r}^{\prime} = \psi_i(H_{i,r}),  X_{i,r}^{+ \prime} = \psi_i(X_{i,r}^{+})$ then
    $$X_{i,r}^{+ \prime}v=0, \quad H_{i,r}^{\prime}v= d_i^{-r-1}\lambda_{i,r-1} v$$ for any $r \geq 0$.

\subsection{Only if part}

The proof of the only if part is the same as one for $U_q(\fg)$ in \cite{Pressley1995} and for the Yangian can be found in \cite{gautam}. We present a sketch of the proof for completeness.
Suppose that $\lambda_i(u)=\frac{P(u+d_{i})}{P(u)}$ for $i=1, \ldots, n$.
 As $\fg$-module we have decomposition
\[
L_{\lambda(u)}=\bigoplus_{\mu \in \mathfrak{h}^{*}}L_{\mu}, \quad \lambda(h_{i})=degP_{i} 
\]
Then we claim that:
\begin{itemize}
\item If $L_{\mu} \neq 0$ then  $\exists N_{\mu} > 0$ such that  $L_{\mu-k\alpha_{i}} = 0$ $\forall k \geq N$;

\item $dimL_{\mu} < \infty$.
\end{itemize}
As the Weyl group acts transitively on the set of weights and there are only finitely many dominant weights less than $\lambda$ from the statements above, it follows that $\dim L_{\lambda(u)} < \infty$. Let $v$ be a highest weight vector in $L_{\lambda(u)}$. The following is straightforward
\begin{lemma}
\label{lemma: 4.5}
    $V_i : = Y_{i}v$ is an irreducible module for all $i = \{1, \ldots, n \}$.
\end{lemma}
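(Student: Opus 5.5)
To prove Lemma~\ref{lemma: 4.5}, I will show that every nonzero $Y_i$-submodule $W \subset V_i = Y_i v$ contains $v$; then $W = V_i$. The tool is the $\fh$-weight grading. The vector $v$ has $\fh$-weight $\lambda$, and $Y_i$ is spanned by ordered monomials in $X^-_{i,r}$, $H_{i,r}$, $X^+_{i,r}$. This follows from Corollary~\ref{cor: triangular} applied to $Y_i\cong Y(\fsl_2)$ via Proposition~\ref{prop: 2.10}. Since $v$ is primitive and the $H_{i,r}$ act on $v$ by scalars, we get $V_i = Y_i^- v$. So $V_i \subset \bigoplus_{k\ge 0} L_{\lambda - k\alpha_i}$, and its $\lambda$-weight space is $\bc v$.

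Now let $W\subset V_i$ be a nonzero $Y_i$-submodule. It is stable under $H_{i,0}$, which acts semisimply on $V_i$, so $W$ is a sum of its weight spaces. Pick a nonzero weight vector $w\in W$ of weight $\lambda-k\alpha_i$ with $k$ minimal. Then $X^+_{i,r}w$ lies in $W$ and has weight $\lambda-(k-1)\alpha_i$, so it vanishes by minimality (when $k=0$ it is zero automatically, since its weight is $\lambda+\alpha_i$, which does not occur). Hence $w$ is killed by every $X^+_{i,r}$.

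The key step is to see that $w$ is also killed by all $X^+_{j,r}$ with $j\ne i$. By (Y4), $X^+_{j,r}$ commutes with every $X^-_{i,s}$ for $j\neq i$. Writing $w = y v$ with $y\in Y_i^-$, we get $X^+_{j,r} w = y X^+_{j,r} v = 0$. Moreover $H_{j,r}$ preserves $W\cap L_{\lambda-k\alpha_i}$ for all $j$: for $j=i$ because $H_{i,r}\in Y_i$, and for $j\neq i$ because $[H_{j,s},X^-_{i,r}]$ is again a combination of $X^-_{i,\cdot}$ terms by (Y2)--(Y3). In fact it is enough that $H_{j,r}$ preserves $Y_i^- v$, since the argument below only uses $w$ being killed by all $X^+_{j,r}$.

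Therefore $w$ is a nonzero vector of $L_{\lambda(u)}$ annihilated by all $X^+_{j,r}$. The irreducible module $L_{\lambda(u)}$ is generated by $w$, so by Corollary~\ref{cor: triangular} we have $L_{\lambda(u)} = Y^- Y^0 Y^+ w = Y^- Y^0 w$. All weights of this module are $\le \lambda-k\alpha_i$, because $Y^0$ preserves weights. Since $v$ has weight $\lambda$, this forces $k=0$, so $w\in \bc v$ and $v\in W$.

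The main obstacle is justifying that the $X^+_{j,r}$ with $j\neq i$ kill $w$. This is handled by the commutation $[X^+_{j,r},X^-_{i,s}]=0$ from (Y4). The rest is routine weight bookkeeping.
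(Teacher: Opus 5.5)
Your proof is correct. The paper gives no proof of this lemma (it only calls it ``straightforward''), and you supply the standard argument. The steps are:
\begin{enumerate}
\item $V_i=Y_i^-v$, by the triangular decomposition of $Y_i\cong Y(\fsl_2)$.
\item A nonzero $Y_i$-submodule $W$ contains a weight vector $w$ of weight $\lambda-k\alpha_i$ with $k$ minimal, so $w$ is killed by all $X^+_{i,r}$.
\item The relation $[X^+_{j,r},X^-_{i,s}]=0$ for $j\neq i$ from (Y4) shows that $w$ is killed by all $X^+_{j,r}$.
\item Irreducibility of $L_{\lambda(u)}$ together with $Y(\fg)=Y^-Y^0Y^+$ gives $L_{\lambda(u)}=Y^-Y^0w$. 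All weights of this module are $\le\lambda-k\alpha_i$, which forces $k=0$ and hence $v\in W$.
\end{enumerate}
Irreducibility of the ambient module $L_{\lambda(u)}$ is exactly what the argument uses; it says nothing for a general highest weight module. Also, nothing in it requires $V_i$ to be finite-dimensional. This matters, since the paper uses the lemma precisely to deduce $\dim V_i<\infty$ from the $Y(\fsl_2)$ classification.

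You should delete the sentence asserting that $H_{j,r}$, $j\neq i$, preserves $W\cap L_{\lambda-k\alpha_i}$. It is unjustified, because $W$ is only a $Y_i$-submodule and $H_{j,r}\notin Y_i$. The stated reason is also inaccurate: by (Y3), the commutator $[H_{j,r+1},X^-_{i,s}]$ involves the products $H_{j,r}X^-_{i,s}+X^-_{i,s}H_{j,r}$, not the $X^-_{i,\cdot}$ alone. As you note yourself, the rest of the argument never uses this claim, so removing it leaves a complete proof.
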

Note that classification for $Y(\fsl_2)$ implies that  $\dim V_i < \infty$.
The following lemma follows from commutation relations in the Yangian.
\begin{lemma}
\label{lemma: 4.6}
    $\forall r >0$ $L_{\mu-r\alpha_{i}}$ is spanned by vectors 
    \begin{equation}
x_{1}^{-}X_{i_{1},k_{1}}...x_{h}^{-}X_{i_{h},k_{h}}x_{h+1}^{-}v
\label{(5)}
    \end{equation}
    where $k_{1}, \cdots ,k_{h} \in \mathbb{N}$, are arbitrary and 
    \[
    x_{p}^{-}=X_{i,l_{1,p}}^{-}\cdots X_{i,l_{r_{p'}},p}^{-}, \quad l_{1,p}, \cdots , l_{r_{p'},p} \in \bz_{\geq 0}
    \]
    $r_{1}, ..., r_{h+1} \in \mathbb{N}$, and $r_{1}+...+r_{h+1}=r$, $r_{1},...,r_{h} \leq r'=max \{ -a_{ij} \}_{j \in I, j \neq i}$
\end{lemma}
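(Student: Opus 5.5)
We would start from Corollary~\ref{cor: triangular}. Since $v$ is primitive, $Y^{+}$ acts on $v$ by scalars (only the constant term survives) and $Y^{0}$ acts on $v$ by scalars. Hence $L_{\lambda(u)} = Y^{-}v$. By Theorem~\ref{thm: pbw}, or simply because $Y^{-}$ is generated by the $X^{-}_{j,k}$, the space $Y^-v$ is spanned by monomials $X^{-}_{j_1,k_1}\cdots X^{-}_{j_N,k_N}v$. The $\fh$-weight of such a monomial is $\lambda-\sum_s\alpha_{j_s}$. So the weight space $L_{\mu-r\alpha_i}$ is spanned by monomials of the appropriate weight.

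Group the letters of each such monomial into maximal blocks of consecutive $X^{-}_{i,\cdot}$'s, separated by single letters $X^{-}_{j,k}$ with $j\neq i$. This already has the shape (\ref{(5)}), with $x^-_p$ the $i$-blocks and $X_{i_p,k_p}$ the separating letters. The only thing left is the bound $r_p\le r'$ for the blocks $p\le h$, i.e.\ the blocks that are followed by some $X^-_{j,k}$. The last block $x^-_{h+1}$ has no constraint.

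\textbf{Step 1: shortening a block.} Suppose a block $x^-_p$ with $p\le h$ has length $\ge 1-a_{ij}$, where $j=i_p$ labels the next letter. We will use (\ref{Y6}) to move $i$-letters to the right of $X^-_{j,k}$. Expand the symmetrized nested commutator in (\ref{Y6}) with $m=1-a_{ij}$ into monomials. The term in which all $m$ letters $X^-_{i,\cdot}$ stand to the left of $X^-_{j,s}$ is the symmetrization $\sum_\pi X^-_{i,r_{\pi_1}}\cdots X^-_{i,r_{\pi_m}}X^-_{j,s}$, and it appears with coefficient $1$. Every other monomial has strictly fewer $i$-letters to the left of $X^-_{j,s}$. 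So the symmetrized product times $X^-_{j,s}$ equals a combination of monomials in which the block preceding $X^-_{j,s}$ is shorter. The letters moved across go into the next block, or into a new block.

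\textbf{Step 2: replacing a product by its symmetrization.} A block contains an ordered product of $m$ letters, not a symmetrized one. To replace it by its symmetrization we use (\ref{Y5}) with $i=j$, or more conceptually the fact that $\operatorname{gr}Y(\fg)\cong U(\fg[t])$ for Levendorskii's filtration. In $\operatorname{gr}$ the elements $X^-_{i,r}$ commute. Hence any ordered product of $m$ letters equals $\tfrac1{m!}$ times its symmetrization modulo $Y(\fg)_{d-1}$, where $d=\sum r_s$. More precisely, the difference is a combination of products of the same letters with total degree $<d$. These lower terms are again monomials in the $X^-$'s of the same weight (commutators $[X^-_{i,a},X^-_{i,b}]$ are expressed back through (\ref{Y5}) as products of two $X^-_{i}$'s). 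So they are handled by induction.

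\textbf{Step 3: the induction.} We order monomials of a fixed weight first by total filtration degree $\sum k_s$. Within a fixed degree we order them by the tuple of block lengths (number of $i$-letters before the first non-$i$ letter, before the second, and so on), compared lexicographically from the left. Step~1 together with Step~2 replaces a monomial violating the bound at its leftmost bad block by monomials that are smaller in this order. The order is a well-order: for a fixed weight there are finitely many letter patterns, and for each pattern the degree is bounded below. Hence the process terminates, and every vector of $L_{\mu-r\alpha_i}$ lies in the span of vectors (\ref{(5)}) with $r_1,\dots,r_h\le -a_{ij}\le r'$.

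The main obstacle is Step~2 combined with the bookkeeping in Step~3. We must make sure that the non-symmetric correction terms produced by (\ref{Y5}) and by expanding (\ref{Y6}) really decrease the chosen order, and do not merely reshuffle the degrees among letters. We would first try to settle this in $\operatorname{gr}Y(\fg)=U(\fg[t])$, where the corresponding statement is the classical Serre-relation argument. Then we would lift it to $Y(\fg)$ by induction on the filtration degree, using Remark~\ref{rem: independence of decomposition}-type arguments to control the lower-order terms.
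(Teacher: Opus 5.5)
Your argument is correct. It is the standard Serre-relation argument (as in Chari--Pressley for $U_q(\hat{\fg})$) that the paper compresses into ``follows from commutation relations in the Yangian'', and it even yields the sharper bound $r_p\le -a_{i,i_p}$. The obstacle you flag at the end is already settled by your own Steps 2--3, for two reasons: the corrections coming from (\ref{Y5}) are quadratic in the $X^-_{i,\cdot}$ with total degree one lower, so they strictly decrease the filtration degree; and the non-leading terms of the expanded (\ref{Y6}) keep the degree but strictly shorten the offending block, so no detour through $\operatorname{gr}Y(\fg)$ or Remark~\ref{rem: independence of decomposition} is needed.
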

Lemma \ref{lemma: 4.6} implies that  $L_{\mu-n\alpha_{i}}=0$ for $n>\lambda(h_{i}) + hr'$ proving first claim. For the second let $V_{i_{1},...,i_{m}} = \spann\left<X^-_{i_1,k_1} \ldots X^-_{i_m, k_m}  v  \right>$
Then again, from the commutation relations in the Yangian, it follows that
\[
V_{i_{1},...,i_{m}}=\sum_{k \geq 0} X_{i,k}^{-}V_{i_{2},...,i_{m}}
\]
and it proofs the second claim.
\subsection{Another proof of only if part.} From Proposition \ref{prop: 3.12}
it follows that in order to show that $$\dim L_{(\lambda_1(u), \ldots, \lambda_n(u))} < \infty$$ it is enough to show that {\it fundamental representations} of $Y(\fg)$ are finite-dimensional.

\begin{definition}
Representation $L_{\lambda(u)}$ is called fundamental if $\lambda_i(u) = 1$ for all $i \not = j$ and $\lambda_j(u) = \frac{u+d_j}{u}$, i.e. $P_i(u) = 1$ for $i \not = j$ and $P_j(u) = u$.
\end{definition}
For all types except $E_7, E_8$, it was done in \cite{Chari1991Fundamental}. For $E_7, E_8$, there is no complete proof in the literature; however, \cite{Kleber1997} gives a decomposition modulo the Kirillov-Reshetikhin conjecture, which was proved in \cite{DiFrancescoKedem2008} and \cite{Hernandez2006}. However, this works allows characters of fundamental $U_q(\hat \fg)$ modules and then using 
\cite{GautamToledanoLaredo2016} results on the equivalence of representation categories of $U_q(\hat \fg)$ and $Y(\fg)$, we obtain a result.

\begin{remark} 
In the paper \cite{Chari1991Fundamental}, in Section 2.16, the authors claim that if we have a certain fundamental representation of $Y(\fg)$ and know that it is finite-dimensional, then taking iterated tensor products one can show that any fundamental representation of $Y(\fg)$ is finite-dimensional. In fact, this is true for a particular fundamental representation and is not true for the $D_{2n}$ type. In the case of $U_q(\hat g)$, the statement was proved in \cite{etingof2003elliptic} by explicit computations. However, we cannot immediately get the same result for the Yangian, as the equivalence from \cite{GautamToledanoLaredo2016} generally does not preserve the tensor product.

\end{remark}

\section{Appendix A}
\begin{lemma}
\label{prop: 7}
    \[
    [h_{1}, e_{1}^{k}]=2ke_{1}^{k-1}e_{2}+k(3k-1)e_{1}^{k}+2ke_{1}^{k}h_{0}
    \]
\end{lemma}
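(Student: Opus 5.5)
The plan is to prove the identity by induction on $k$, using only the specialisations of relations (Y2), (Y3) and (Y5) to $Y(\fsl_2)$. In the notation $e_i=X^+_{1,i}$, $h_i=H_{1,i}$ we have $d_1=1$ and $a_{11}=2$, so $\tfrac12 d_1a_{11}=1$. I extract three elementary commutation rules:
\[
h_0e_1=e_1h_0+2e_1,\qquad e_2e_1=e_1e_2+e_1^2,\qquad [h_1,e_1]=2e_2+2e_1+2e_1h_0 .
\]
The first is (Y2) with $s=1$. For the second, take (Y5) with $r=s=1$; it reads $[e_2,e_1]-[e_1,e_2]=2e_1^2$, i.e.\ $2[e_2,e_1]=2e_1^2$. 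For the third, take (Y3) with $r=0$, $s=1$:
\[
[h_1,e_1]-[h_0,e_2]=h_0e_1+e_1h_0 .
\]
Substituting $[h_0,e_2]=2e_2$ from (Y2), and $h_0e_1=e_1h_0+2e_1$, gives the stated formula for $[h_1,e_1]$. This last rule is exactly the case $k=1$ of the lemma, since $k(3k-1)=2$ and $2k=2$ there. That settles the base case.

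For the inductive step I use the derivation property
\[
[h_1,e_1^{k+1}]=[h_1,e_1^k]\,e_1+e_1^k\,[h_1,e_1].
\]
I insert the induction hypothesis into the first summand and then move every $e_1$ to the left past $e_2$ and past $h_0$, using the first two rules. Concretely, $e_1^{k-1}e_2e_1$ becomes $e_1^ke_2+e_1^{k+1}$, and $e_1^kh_0e_1$ becomes $e_1^{k+1}h_0+2e_1^{k+1}$. Hence the first summand equals
\[
2k\,e_1^ke_2+\bigl(2k+k(3k-1)+4k\bigr)e_1^{k+1}+2k\,e_1^{k+1}h_0 .
\]
The second summand is $2e_1^ke_2+2e_1^{k+1}+2e_1^{k+1}h_0$.

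Adding the two summands, the coefficient of $e_1^ke_2$ is $2(k+1)$ and the coefficient of $e_1^{k+1}h_0$ is $2(k+1)$. The coefficient of $e_1^{k+1}$ is
\[
3k^2+5k+2=(k+1)(3k+2)=(k+1)\bigl(3(k+1)-1\bigr).
\]
This is precisely the claimed formula with $k$ replaced by $k+1$.

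There is no real obstacle here. The only place where care is needed is fixing the normalisation and signs in (Y3) and (Y5) for $\fsl_2$, namely that the right-hand sides carry coefficient $1$ and that $[e_2,e_1]=+e_1^2$ rather than $-e_1^2$. A wrong sign there would change the constant $k(3k-1)$, so I will double-check it against the $k=1$ case derived directly from (Y3) before running the induction.
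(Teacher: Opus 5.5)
Your proof is correct and follows the paper's argument essentially step by step: induction via $[h_1,e_1^{k+1}]=[h_1,e_1^k]e_1+e_1^k[h_1,e_1]$, with $[h_1,e_1]=2e_2+2e_1+2e_1h_0$ from (Y3), $[e_2,e_1]=e_1^2$ from (Y5), and $[h_0,e_1]=2e_1$ from (Y2) used to reorder. The paper's text misprints that last rule as $[h_1,e_1]=2e_1$, but it is used in the same way you use $h_0e_1=e_1h_0+2e_1$, so your computation is the intended one.
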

\begin{proof}
We proceed by induction. Suppose that
\[
[h_{1}, e_{1}^{k}]=2ke_{1}^{k-1}e_{2}+k(3k-1)e_{1}^{k}+2ke_{1}^{k}h_{0}
\]
Now calculate 
\[
\begin{split}
    [h_{1}, e_{1}^{k+1}]=h_{1}e_{1}^{k}e_{1}-e_{1}^{k}e_{1}h_{1}=e_{1}^{k}h_{1}e_{1}+[h_{1}, e_{1}^{k}]e_{1}-e_{1}^{k}h_{1}e_{1}+e_{1}^{k}[h_{1}, e_{1}]=\\
    =[h_{1}, e_{1}^{k}]e_{1}+e_{1}^{k}[h_{1}, e_{1}]
\end{split}
\]
Using commutation relations, we obtain
\[
\begin{split}
    [h_{1}, e_{1}]-[h_{0}, e_{2}]=h_{0}e_{1}+e_{1}h_{0} \Leftrightarrow [h_{1}, e_{1}]-2e_{2}=2e_{1}h_{0}+2e_{1} \Leftrightarrow \\
    \Leftrightarrow [h_{1}, e_{1}]=2e_{2}+2e_{1}h_{0}+2e_{1}
\end{split}
\]
Then
\[
\begin{split}
    [h_{1}, e_{1}^{k+1}]=(2ke_{1}^{k-1}e_{2}+k(3k-1)e_{1}^{k}+2ke_{1}^{k}h_{0})e_{1}+e_{1}^{k}(2e_{2}+2e_{1}h_{0}+2e_{1})=\\
    =2ke_{1}^{k-1}e_{2}e_{1}+k(3k-1)e_{1}^{k}e_{1}+2ke_{1}^{k}h_{0}e_{1}+2e_{1}^{k}e_{2}+2e_{1}^{k+1}h_{0}+2e_{1}^{k+1}=\\
    =2ke_{1}^{k-1}e_{2}e_{1}+k(3k-1)e_{1}^{k+1}+2ke_{1}^{k}h_{0}e_{1}+2e_{1}^{k}e_{2}+2e_{1}^{k+1}h_{0}+2e_{1}^{k+1}
\end{split}
\]
Let us now use the following commutation relation
\[
[e_{2}, e_{1}]-[e_{1}, e_{2}]=e_{1}e_{1}+e_{1}e_{1} \Leftrightarrow [e_{2}, e_{1}]=e_{1}^{2}
\]
Thus
\[
2ke_{1}^{k-1}e_{2}e_{1}=2ke_{1}^{k}e_{2}+2ke_{1}^{k-1}[e_{2}, e_{1}]=2ke_{1}^{k}e_{2}+2ke_{1}^{k+1}
\]
Substituting this into our expression, we obtain
\[
\begin{split}
    [h_{1}, e_{1}^{k+1}]=2ke_{1}^{k}e_{2}+2ke_{1}^{k+1}+k(3k-1)e_{1}^{k+1}+2ke_{1}^{k}h_{0}e_{1}+2e_{1}^{k}e_{2}+2e_{1}^{k+1}h_{0}+2e_{1}^{k+1}
\end{split}
\]
Using the relation 
\[
[h_{1}, e_{1}]=2e_{1}
\]
we have 
\[
2ke_{1}^{k}h_{0}e_{1}=2ke_{1}^{k+1}h_{0}+4ke_{1}^{k+1}
\]
Thus 
\[
\begin{split}
    [h_{1}, e_{1}^{k+1}]=2ke_{1}^{k}e_{2}+2ke_{1}^{k+1}+k(3k-1)e_{1}^{k+1}+2ke_{1}^{k+1}h_{0}+4ke_{1}^{k+1}+2e_{1}^{k}e_{2}+2e_{1}^{k+1}h_{0}+2e_{1}^{k+1}=\\
    =2(k+1)e_{1}^{k}e_{2}+(2k+3k^{2}-k+4k+2)e_{1}^{k+1}+2(k+1)e_{1}^{k+1}h_{0}=\\
    =2(k+1)e_{1}^{k}e_{2}+(3k^{2}+5k+2)e_{1}^{k+1}+2(k+1)e_{1}^{k+1}h_{0}
\end{split}
\]
\[
3k^{2}+5k+2=3(k+1)(k+\frac{2}{3})=(k+1)(3k+2)=(k+1)(3(k+1)-1)
\]
Finally
\[
[h_{1}, e_{1}^{k+1}]=2(k+1)e_{1}^{k}e_{2}+(k+1)(3(k+1)-1)e_{1}^{k+1}+2(k+1)e_{1}^{k+1}h_{0}
\]
\end{proof}
We prove the following statement in a similar way.
\begin{lemma}
\label{prop: 8}
    \[
    [h_{1}, f_{0}^{k}]=-2kf_{0}^{k-1}f_{1}+k(3k-1)f_{0}^{k}-2kf_{0}^{k}h_{0}
    \]
\end{lemma}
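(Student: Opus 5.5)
The plan is to copy the inductive argument of Lemma~\ref{prop: 7}, with the relations for $e$ replaced by their analogues for $f$. Everything rests on two commutation relations that I first extract from Definition~\ref{def:2.2} for $\fsl_2$, where $d_1=1$ and $a_{11}=2$.

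First I derive the basic relation for $[h_1,f_0]$.
\begin{itemize}
\item (Y2) gives $[h_0,f_s]=-2f_s$, so $h_0f_0=f_0h_0-2f_0$.
\item (Y3) with $r=s=0$ reads $[h_1,f_0]-[h_0,f_1]=-(h_0f_0+f_0h_0)$.
\item Substituting $[h_0,f_1]=-2f_1$ and $h_0f_0=f_0h_0-2f_0$ gives
\[
[h_1,f_0]=-2f_1-2f_0h_0+2f_0 .
\]
\end{itemize}
This is exactly the claimed formula for $k=1$, so it is the base of the induction.

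Second, I derive the exchange relation for $f_1$ and $f_0$. (Y5) with the minus sign and $r=s=0$ gives $[f_1,f_0]-[f_0,f_1]=-2f_0^2$. Hence $[f_1,f_0]=-f_0^2$, that is,
\[
f_1f_0=f_0f_1-f_0^2 .
\]

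For the inductive step, assume the formula holds for $k$. As in Lemma~\ref{prop: 7}, I write
\[
[h_1,f_0^{k+1}]=[h_1,f_0^k]\,f_0+f_0^k\,[h_1,f_0].
\]
I then move every $f_1$ and $h_0$ to the right in each term.
\begin{itemize}
\item The term $-2kf_0^{k-1}f_1f_0$ becomes $-2kf_0^kf_1+2kf_0^{k+1}$, by the exchange relation.
\item The term $-2kf_0^kh_0f_0$ becomes $-2kf_0^{k+1}h_0+4kf_0^{k+1}$, by $h_0f_0=f_0h_0-2f_0$.
\item The term $k(3k-1)f_0^{k+1}$ is already in normal form.
\item The term $f_0^k[h_1,f_0]$ contributes $-2f_0^kf_1-2f_0^{k+1}h_0+2f_0^{k+1}$.
\end{itemize}

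Collecting terms:
\begin{itemize}
\item the coefficient of $f_0^kf_1$ is $-2(k+1)$;
\item the coefficient of $f_0^{k+1}h_0$ is $-2(k+1)$;
\item the scalar coefficient of $f_0^{k+1}$ is $k(3k-1)+2k+4k+2=3k^2+5k+2=(k+1)(3(k+1)-1)$.
\end{itemize}
This is the claimed formula for $k+1$.

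There is no real obstacle here. The only step needing care is the signs: the minus-sign versions of (Y3) and (Y5), together with $[h_0,f]=-2f$, produce sign changes relative to Lemma~\ref{prop: 7}. Because of them the $h_0$ and $f_1$ terms carry $-2k$, while the scalar term keeps the positive coefficient $k(3k-1)$. I would double-check these signs against the $k=1$ case computed above.
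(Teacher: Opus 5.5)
Your proposal is correct and follows the paper's proof essentially line by line. It uses the same splitting $[h_1,f_0^{k+1}]=[h_1,f_0^k]f_0+f_0^k[h_1,f_0]$, the same three relations $[h_1,f_0]=-2f_1-2f_0h_0+2f_0$, $[f_1,f_0]=-f_0^2$ and $[h_0,f_0]=-2f_0$, and the same coefficient count $3k^2+5k+2=(k+1)(3(k+1)-1)$; your explicit observation that the $k=1$ case is exactly the relation for $[h_1,f_0]$ supplies the base case the paper leaves implicit.
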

\begin{proof}
    We proceed by induction. Suppose that
    \[
    [h_{1}, f_{0}^{k}]=-2kf_{0}^{k-1}f_{1}+k(3k-1)f_{0}^{k}-2kf_{0}^{k}h_{0}
    \]
    Now calculate 
    \[
    \begin{split}
        [h_{1}, f_{0}^{k+1}]=h_{1}f_{0}^{k}f_{0}-f_{0}^{k}f_{0}h_{1}=f_{0}^{k}h_{1}f_{0}+[h_{1}, f_{0}^{k}]f_{0}-f_{0}^{k}h_{1}f_{0}+f_{0}^{k}[h_{1}, f_{0}]=\\
        =[h_{1}, f_{0}^{k}]f_{0}+f_{0}^{k}[h_{1}, f_{0}]
    \end{split}
    \]
    Using commutation relations, we obtain
    \[
    \begin{split}
        [h_{1}, f_{0}]-[h_{0}, f_{1}]=-h_{0}f_{0}-f_{0}h_{0} \Leftrightarrow [h_{1}, f_{0}]+2f_{1}=-2f_{0}h_{0}+2f_{0} \Leftrightarrow \\
        \Leftrightarrow [h_{1}, f_{0}]=-2f_{1}-2f_{0}h_{0}+2f_{0}
    \end{split}
    \]
    Thus
    \[
    \begin{split}
        [h_{1}, f_{0}^{k+1}]=(-2kf_{0}^{k-1}f_{1}+k(3k-1)f_{0}^{k}-2kf_{0}^{k}h_{0})f_{0}+f_{0}^{k}(-2f_{1}-2f_{0}h_{0}+2f_{0})=\\
        =-2kf_{0}^{k-1}f_{1}f_{0}+k(3k-1)f_{0}^{k}f_{0}-2kf_{0}^{k}h_{0}f_{0}-2f_{0}^{k}f_{1}-2f_{0}^{k+1}h_{0}+2f_{0}^{k+1}=\\
        =-2kf_{0}^{k-1}f_{1}f_{0}+k(3k-1)f_{0}^{k+1}-2kf_{0}^{k}h_{0}f_{0}-2f_{0}^{k}f_{1}-2f_{0}^{k+1}h_{0}+2f_{0}^{k+1}
    \end{split}
    \]
    Commutation relation 
    \[
    [f_{1}, f_{0}]-[f_{0}, f_{1}]=-f_{0}f_{0}-f_{0}f_{0} \Leftrightarrow [f_{1}, f_{0}]=-f_{0}^{2}
    \]
    gives us
    \[
    -2kf_{0}^{k-1}f_{1}f_{0}=-2kf_{0}^{k}f_{1}-2kf_{0}^{k-1}[f_{1}, f_{0}]=-2kf_{0}^{k}f_{1}+2kf_{0}^{k+1}
    \]
    Substituting this into our expression, we obtain
    \[
    \begin{split}
        [h_{1}, f_{0}^{k+1}]=-2kf_{0}^{k}f_{1}+2kf_{0}^{k+1}+k(3k-1)f_{0}^{k+1}-2kf_{0}^{k}h_{0}f_{0}-2f_{0}^{k}f_{1}-2f_{0}^{k+1}h_{0}+2f_{0}^{k+1}=\\
        =-2(k+1)f_{0}^{k}f_{1}+2kf_{0}^{k+1}+k(3k-1)f_{0}^{k+1}-2kf_{0}^{k}h_{0}f_{0}-2f_{0}^{k+1}h_{0}+2f_{0}^{k+1}
    \end{split}
    \]
    Finally use
    \[
    [h_{0}, f_{0}]=-2f_{0}
    \]
    Then
    \[
    \begin{split}
        -2kf_{0}^{k}h_{0}f_{0}=-2kf_{0}^{k+1}h_{0}-2kf_{0}^{k}[h_{0}, f_{0}]=-2kf_{0}^{k+1}h_{0}+4kf_{0}^{k+1}
    \end{split}
    \]
    So we have
    \[
    \begin{split}
        [h_{1}, f_{0}^{k+1}]=-2(k+1)f_{0}^{k}f_{1}+2kf_{0}^{k+1}+k(3k-1)f_{0}^{k+1}-2kf_{0}^{k+1}h_{0}+4kf_{0}^{k+1}-2f_{0}^{k+1}h_{0}+2f_{0}^{k+1}=\\
        =-2(k+1)f_{0}^{k}f_{1}+(2k+3k^2-k+4k+2)f_{0}^{k+1}-2(k+1)f_{0}^{k+1}h_{0}=\\
        =-2(k+1)f_{0}^{k}f_{1}+(3k^{2}+5k+2)f_{0}^{k+1}-2(k+1)f_{0}^{k+1}h_{0}=\\
        =-2(k+1)f_{0}^{k}f_{1}+(k+1)(3(k+1)-1)f_{0}^{k+1}-2(k+1)f_{0}^{k+1}h_{0}
    \end{split}
    \]
\end{proof}
Using these two propositions, we can compute the commutator 
\[
\begin{split}
    [e_{1}^{k+1}, f_{0}]=-[f_{0}, e_{1}^{k+1}]=-\sum_{t=0}^{k}e_{1}^{t}[f_{0}, e_{1}]e_{1}^{k-t}=\sum_{t=0}^{k}e_{1}^{t}h_{1}e_{1}^{k-t}
\end{split}
\]
Using proposition \ref{prop: 7}, we can write
\[
\begin{split}
    [e_{1}^{k+1}, f_{0}]=\sum_{t=0}^{k}e_{1}^{t}(e_{1}^{k-t}h_{1}+[h_{1}, e_{1}^{k-t}])=\sum_{t=0}^{k}e_{1}^{k}h_{1}+\sum_{t=0}^{k}e_{1}^{t}[h_{1}, e_{1}^{k-t}]=\\
    =(k+1)e_{1}^{k}h_{1}+\sum_{t=0}^{k}e_{1}^{t}\Big( 2(k-t)e_{1}^{k-t-1}e_{2}+(k-t)(3k-3t-1)e_{1}^{k-t}+2(k-t)e_{1}^{k-t}h_{0} \Big)=\\
    =(k+1)e_{1}^{k}h_{1}+\sum_{t=0}^{k}\Big( 2(k-t)e_{1}^{k-1}e_{2}+(k-t)(3k-3t-1)e_{1}^{k}+2(k-t)e_{1}^{k}h_{0} \Big)=\\
    =(k+1)e_{1}^{k}h_{1}+2 \Big( \sum_{t=0}^{k} (k-t) \Big) e_{1}^{k-1}e_{2} + \Big( \sum_{t=0}^{k}(k-t)(3k-3t-1) \Big) e_{1}^{k} + 2\Big( \sum_{t=0}^{k} (k-t) \Big) e_{1}^{k}h_{0}
\end{split}
\]
Compute sums  
\[
\sum_{t=0}^{k}(k-t)=\frac{k(k+1)}{2} \quad \sum_{t=0}^{k}(k-t)(3k-3t-1)=k^{2}(k+1)
\]
Then write 
\[
[e_{1}^{k+1}, f_{0}]=(k+1)e_{1}^{k}h_{1}+k(k+1)e_{1}^{k-1}e_{2}+k^{2}(k+1)e_{1}^{k}+k(k+1)e_{1}^{k}h_{0}
\]
\begin{lemma}
\label{prop: 9}
\[
[e_{1}^{k+1}, f_{0}]=(k+1)e_{1}^{k}h_{1}+k(k+1)e_{1}^{k-1}e_{2}+k^{2}(k+1)e_{1}^{k}+k(k+1)e_{1}^{k}h_{0}
\]
\end{lemma}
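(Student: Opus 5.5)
The plan is to expand the commutator telescopically and then normal-order each term using Lemma~\ref{prop: 7}; this is exactly the computation displayed just before the statement.

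\textbf{Step 1: telescope.} Write
\[
[e_1^{k+1},f_0]=\sum_{t=0}^{k}e_1^{t}[e_1,f_0]e_1^{k-t}.
\]
Relation (Y4) gives $[e_1,f_0]=h_1$, so the sum becomes $\sum_{t=0}^k e_1^t h_1 e_1^{k-t}$.

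\textbf{Step 2: move $h_1$ to the right.} In each summand write
\[
h_1e_1^{k-t}=e_1^{k-t}h_1+[h_1,e_1^{k-t}],
\]
and apply Lemma~\ref{prop: 7} with $k$ replaced by $k-t$. The term with $t=k$ contributes no commutator. The result is
\[
(k+1)e_1^kh_1+\sum_{t=0}^k\bigl(2(k-t)e_1^{k-1}e_2+(k-t)(3(k-t)-1)e_1^{k}+2(k-t)e_1^{k}h_0\bigr).
\]
Here we used $e_1^{t}e_1^{k-t-1}e_2=e_1^{k-1}e_2$. No reordering of $e_1$ past $e_2$ is needed, because Lemma~\ref{prop: 7} already places $e_2$ at the far right of the $e$-part.

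\textbf{Step 3: evaluate the coefficients.} Substitute $s=k-t$. The first sum is $\sum_{s=0}^k s=k(k+1)/2$, so twice it gives $k(k+1)$ in front of both $e_1^{k-1}e_2$ and $e_1^kh_0$. The second sum is
\[
\sum_{s=0}^k (3s^2-s)=\tfrac{k(k+1)(2k+1)}{2}-\tfrac{k(k+1)}{2}=k^2(k+1).
\]
Collecting terms gives the stated formula.

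\textbf{Main obstacle.} The only genuine content is Lemma~\ref{prop: 7}, which we may assume. Otherwise the work is bookkeeping: the key points are to use (Y4) in the sign convention $[e_1,f_0]=h_1$, and to check that the normal ordering is consistent (all $e$'s, then $h$'s on the right). Edge cases need a check. For $k=0$ the statement reads $[e_1,f_0]=h_1$, which holds. The term $e_1^{k-1}e_2$ carries the factor $k$, so it vanishes when $k=0$.
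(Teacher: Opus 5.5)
Your proposal is correct and follows essentially the same route as the paper. You telescope $[e_1^{k+1},f_0]$ into $\sum_{t=0}^{k}e_1^{t}h_1e_1^{k-t}$ via (Y4), move $h_1$ to the right with Lemma~\ref{prop: 7}, and evaluate $\sum_{s=0}^{k}s=k(k+1)/2$ and $\sum_{s=0}^{k}s(3s-1)=k^2(k+1)$; the paper differs only cosmetically, writing the telescoping as $-\sum_{t}e_1^{t}[f_0,e_1]e_1^{k-t}$.
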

In a similar way, we can compute the commutator:
\[
\begin{split}
    [e_{1}, f_{0}^{k+1}]=\sum_{t=0}^{k}f_{0}^{t}[e_{1}, f_{0}]f_{0}^{k-t}=\sum_{t=0}^{k}f_{0}^{t}h_{1}f_{0}^{k-t}
\end{split}
\]
Here we can use Proposition \ref{prop: 8}. We obtain
\[
\begin{split}
    [e_{1}, f_{0}^{k+1}]=\sum_{t=0}^{k}f_{0}^{t} \Big( f_{0}^{k-t}h_{1}+[h_{1}, f_{0}^{k-t}] \Big) =\sum_{t=0}^{k}f_{0}^{k}h_{1}+\sum_{t=0}^{k}f_{0}^{t}[h_{1}, f_{0}^{k-t}]=\\
    =(k+1)f_{0}^{k}h_{1}+\sum_{t=0}^{k}f_{0}^{t} \Big( -2(k-t)f_{0}^{k-t-1}f_{1}+(k-t)(3k-3t-1)f_{0}^{k-t}-2(k-t)f_{0}^{k-t}h_{0} \Big) =\\
    =(k+1)f_{0}^{k}h_{1}-2\Big( \sum_{t=0}^{k}(k-t) \Big) f_{0}^{k-1}f_{1}+\Big( \sum_{t=0}^{k}(k-t)(3k-3t-1) \Big) f_{0}^{k} - 2\Big( \sum_{t=0}^{k} (k-t) \Big) f_{0}^{k}h_{0}=\\
    =(k+1)f_{0}^{k}h_{1}-k(k+1)f_{0}^{k-1}f_{1}+k^{2}(k+1)f_{0}^{k}-k(k+1)f_{0}^{k}h_{0}
\end{split}
\]
\begin{lemma}
\label{prop: 10}
    \[
    [e_{1}, f_{0}^{k+1}]=(k+1)f_{0}^{k}h_{1}-k(k+1)f_{0}^{k-1}f_{1}+k^{2}(k+1)f_{0}^{k}-k(k+1)f_{0}^{k}h_{0}
    \]
\end{lemma}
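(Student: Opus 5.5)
To prove Lemma~\ref{prop: 10}, I will run the same telescoping argument that produced Lemma~\ref{prop: 9}, with the roles of $e_1$ and $f_0$ swapped and Lemma~\ref{prop: 8} used in place of Lemma~\ref{prop: 7}.

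\textbf{Step 1: expand the commutator.} Since $[e_1,\cdot\,]$ is a derivation,
\[
[e_1,f_0^{k+1}]=\sum_{t=0}^{k} f_0^{t}\,[e_1,f_0]\,f_0^{k-t}.
\]
By \eqref{Y4} we have $[e_1,f_0]=h_1$, so each summand is $f_0^t h_1 f_0^{k-t}$.

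\textbf{Step 2: move $h_1$ to the right.} Write $h_1 f_0^{k-t}=f_0^{k-t}h_1+[h_1,f_0^{k-t}]$. The first pieces contribute $(k+1)f_0^k h_1$ in total. For the second pieces, insert the formula of Lemma~\ref{prop: 8} with $k$ replaced by $k-t$:
\[
[h_1,f_0^{k-t}]=-2(k-t)f_0^{k-t-1}f_1+(k-t)(3k-3t-1)f_0^{k-t}-2(k-t)f_0^{k-t}h_0 .
\]
Multiplying on the left by $f_0^t$ and collecting gives
\[
-2\Big(\sum_t (k-t)\Big) f_0^{k-1}f_1+\Big(\sum_t (k-t)(3k-3t-1)\Big) f_0^{k}-2\Big(\sum_t (k-t)\Big) f_0^{k}h_0 .
\]
The $t=k$ term of Lemma~\ref{prop: 8} is the degenerate case $[h_1,f_0^0]=0$. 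This is consistent with the formula, because the coefficient $(k-t)$ vanishes there, so the formally meaningless $f_0^{-1}$ never actually appears.

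\textbf{Step 3: evaluate the sums.} Set $s=k-t$, which runs from $0$ to $k$. Then
\[
\sum_{s=0}^{k} s=\frac{k(k+1)}{2},\qquad \sum_{s=0}^{k} s(3s-1)=3\cdot\frac{k(k+1)(2k+1)}{6}-\frac{k(k+1)}{2}=k^2(k+1).
\]
Substituting these values yields exactly the claimed identity.

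\textbf{Main obstacle.} The only real point to get right is Lemma~\ref{prop: 8} itself, which I take as given. There, the signs coming from \eqref{Y3} and \eqref{Y5} for the minus generators (namely $[h_1,f_0]=-2f_1-2f_0h_0+2f_0$ and $[f_1,f_0]=-f_0^2$) must be tracked carefully. Once that lemma is accepted, the present statement is bookkeeping.
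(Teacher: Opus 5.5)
Your proposal is correct and matches the paper's proof: expand $[e_1,f_0^{k+1}]=\sum_{t=0}^{k} f_0^{t}h_1 f_0^{k-t}$ via \eqref{Y4}, commute $h_1$ to the right using Lemma~\ref{prop: 8}, and evaluate $\sum_t (k-t)=k(k+1)/2$ and $\sum_t (k-t)(3k-3t-1)=k^2(k+1)$. Your remark that the $t=k$ term is harmless because its coefficient $(k-t)$ vanishes is a small clarification that the paper leaves implicit.
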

Now we can carry out the main computation. 
\[
\begin{split}
    e_{1}^{k}f_{0}^{k+1}=e_{1}^{k}f_{0}f_{0}^{k}=f_{0}e_{1}^{k}f_{0}^{k}+[e_{1}^{k}, f_{0}]f_{0}^{k}
\end{split}
\]
Using proposition \ref{prop: 9}
\[
\begin{split}
    e_{1}^{k}f_{0}^{k+1}=f_{0}e_{1}^{k}f_{0}^{k}+\Big(ke_{1}^{k-1}h_{1}+k(k-1)e_{1}^{k-2}e_{2}+k(k-1)^{2}e_{1}^{k-1}+k(k-1)e_{1}^{k-1}h_{0} \Big) f_{0}^{k}=\\
    =f_{0}e_{1}^{k}f_{0}^{k}+ke_{1}^{k-1}h_{1}f_{0}^{k}+k(k-1)e_{1}^{k-2}e_{2}f_{0}^{k}+k(k-1)^{2}e_{1}^{k-1}f_{0}^{k}+k(k-1)e_{1}^{k-1}h_{0}f_{0}^{k}=\\
    =f_{0}e_{1}^{k}f_{0}^{k}+ke_{1}^{k-1}h_{1}f_{0}^{k}+k(k-1)\Big( e_{1}^{k-2}e_{2}+e_{1}^{k-1}h_{0} \Big) f_{0}^{k}+k(k-1)^{2}e_{1}^{k-1}f_{0}^{k}
\end{split}
\]
From the proposition \ref{prop: 7} 
\[
\begin{split}
    [h_{1}, e_{1}^{k-1}]=2(k-1)e_{1}^{k-2}e_{2}+(k-1)(3(k-1)-1)e_{1}^{k-1}+2(k-1)e_{1}^{k-1}h_{0} \Leftrightarrow \\
    \Leftrightarrow [h_{1}, e_{1}^{k-1}]=2(k-1)e_{1}^{k-2}e_{2}+(k-1)(3k-4)e_{1}^{k-1}+2(k-1)e_{1}^{k-1}h_{0} \Leftrightarrow \\
    \Leftrightarrow [h_{1}, e_{1}^{k-1}]=2(k-1) \Big( e_{1}^{k-2}e_{2}+e_{1}^{k-1}h_{0} \Big) +(k-1)(3k-4)e_{1}^{k-1} \Leftrightarrow \\
    \Leftrightarrow 2(k-1)(e_{1}^{k-2}e_{2}+e_{1}^{k-1}h_{0})=[h_{1}, e_{1}^{k-1}]-(k-1)(3k-4)e_{1}^{k-1} \Leftrightarrow \\
    \Leftrightarrow e_{1}^{k-2}e_{2}+e_{1}^{k-1}h_{0}=\frac{1}{2(k-1)}[h_{1}, e_{1}^{k-1}]-\frac{3k-4}{2}e_{1}^{k-1}
\end{split}
\]
Substituting, we obtain 
\[
\begin{split}
    e_{1}^{k}f_{0}^{k+1}=f_{0}e_{1}^{k}f_{0}^{k}+ke_{1}^{k-1}h_{1}f_{0}^{k}+k(k-1) \Big(\frac{1}{2(k-1)}[h_{1}, e_{1}^{k-1}]-\frac{3k-4}{2}e_{1}^{k-1} \Big) f_{0}^{k}+k(k-1)^{2}e_{1}^{k-1}f_{0}^{k}=\\
    =f_{0}e_{1}^{k}f_{0}^{k}+ke_{1}^{k-1}h_{1}f_{0}^{k}+\frac{k}{2}[h_{1}, e_{1}^{k-1}]f_{0}^{k}-\frac{k(k-1)(3k-4)}{2}e_{1}^{k-1}f_{0}^{k}+k(k-1)^{2}e_{1}^{k-1}f_{0}^{k}=\\
    =f_{0}e_{1}^{k}f_{0}^{k}+\frac{k}{2}[h_{1}, e_{1}^{k-1}]f_{0}^{k}+k(k-1) \Big( k-1-\frac{3k-4}{2} \Big)e_{1}^{k-1}f_{0}^{k}+ke_{1}^{k-1}h_{1}f_{0}^{k}=\\
    =f_{0}e_{1}^{k}f_{0}^{k}+\frac{k}{2}[h_{1}, e_{1}^{k-1}]f_{0}^{k}+k(k-1)\frac{-k+2}{2}e_{1}^{k-1}f_{0}^{k}+ke_{1}^{k-1}h_{1}f_{0}^{k}=\\
    =f_{0}e_{1}^{k}f_{0}^{k}+\frac{k}{2}[h_{1}, e_{1}^{k-1}]f_{0}^{k}-\frac{k(k-1)(k-2)}{2}e_{1}^{k-1}f_{0}^{k}+ke_{1}^{k-1}h_{1}f_{0}^{k}=\\
    =f_{0}e_{1}^{k}f_{0}^{k}+\frac{k}{2}[h_{1}, e_{1}^{k-1}]f_{0}^{k}+ke_{1}^{k-1}f_{0}^{k}h_{1}+ke_{1}^{k-1}[h_{1}, f_{0}^{k}]-\frac{k(k-1)(k-2)}{2}e_{1}^{k-1}f_{0}^{k}
\end{split}
\]
Thus, we have
\[
e_{1}^{k}f_{0}^{k+1}=f_{0}e_{1}^{k}f_{0}^{k}+ke_{1}^{k-1}f_{0}^{k}h_{1}+ke_{1}^{k-1}[h_{1}, f_{0}^{k}]+\frac{k}{2}[h_{1}, e_{1}^{k-1}]f_{0}^{k}-\frac{k(k-1)(k-2)}{2}e_{1}^{k-1}f_{0}^{k}
\]
Using proposition \ref{prop: 10}, we can write
\[
f_{0}^{k}h_{1}=\frac{1}{k+1}[e_{1}, f_{0}^{k+1}]+kf_{0}^{k-1}f_{1}-k^{2}f_{0}^{k}+kf_{0}^{k}h_{0}
\]
We obtain
\[
\begin{split}
    e_{1}^{k}f_{0}^{k+1}=f_{0}e_{1}^{k}f_{0}^{k}+ke_{1}^{k-1} \Big( \frac{1}{k+1}[e_{1}, f_{0}^{k+1}]+kf_{0}^{k-1}f_{1}-k^{2}f_{0}^{k}+kf_{0}^{k}h_{0} \Big)+\\
    +ke_{1}^{k-1}[h_{1}, f_{0}^{k}]+\frac{k}{2}[h_{1}, e_{1}^{k-1}]f_{0}^{k}-\frac{k(k-1)(k-2)}{2}e_{1}^{k-1}f_{0}^{k}
\end{split}
\]
Next, we consider the equality modulo the highest weight. Then
\[
[e_{1}, f_{0}^{k+1}]v=e_{1}f_{0}^{k+1}v
\]
So we have
\[
\begin{split}
    e_{1}^{k}f_{0}^{k+1}v=f_{0}e_{1}^{k}f_{0}^{k}v+\frac{k}{k+1}e_{1}^{k}f_{0}^{k+1}v+k^{2}e_{1}^{k-1}f_{0}^{k-1}f_{1}v-k^{3}e_{1}^{k-1}f_{0}^{k}v+k^{2}e_{1}^{k-1}f_{0}^{k}h_{0}v+\\
    +ke_{1}^{k-1}[h_{1}, f_{0}^{k}]v+\frac{k}{2}[h_{1}, e_{1}^{k-1}]f_{0}^{k}v-\frac{k(k-1)(k-2)}{2}e_{1}^{k-1}f_{0}^{k}v
\end{split}
\]
Collect on the left-hand side all terms containing $e_{1}^{k}f_{0}^{k+1}v$:
\[
\begin{split}
    \frac{1}{k+1}e_{1}^{k}f_{0}^{k+1}v=f_{0}e_{1}^{k}f_{0}^{k}v+k^{2}e_{1}^{k-1}f_{0}^{k-1}f_{1}v-k^{3}e_{1}^{k-1}f_{0}^{k}v+k^{2}e_{1}^{k-1}f_{0}^{k}h_{0}v+\\
    +ke_{1}^{k-1}[h_{1}, f_{0}^{k}]v+\frac{k}{2}[h_{1}, e_{1}^{k-1}]f_{0}^{k}v-\frac{k(k-1)(k-2)}{2}e_{1}^{k-1}f_{0}^{k}v
\end{split}
\]
The coefficient of $e_{1}^{k-1}f_{0}^{k}v$:
\[
\begin{split}
    -k^{3}-\frac{k(k-1)(k-2)}{2}=-\frac{2k^{3}+k(k^{2}-3k+2)}{2}=-\frac{3k^{3}-3k^{2}+2k}{2}=-\frac{k(3k^{2}-3k+2)}{2}
\end{split}
\]
Thus we can write
\[
\begin{split}
\frac{1}{k+1}e_{1}^{k}f_{0}^{k+1}v=f_{0}e_{1}^{k}f_{0}^{k}v+k^{2}e_{1}^{k-1}f_{0}^{k-1}f_{1}v+k^{2}e_{1}^{k-1}f_{0}^{k}h_{0}v+\\
+ke_{1}^{k-1}[h_{1}, f_{0}^{k}]v+\frac{k}{2}[h_{1}, e_{1}^{k-1}]f_{0}^{k}v-\frac{k(3k^{2}-3k+2)}{2}e_{1}^{k-1}f_{0}^{k}v=\\
=f_{0}e_{1}^{k}f_{0}^{k}v+k^{2}e_{1}^{k-1}(f_{0}^{k-1}f_{1}v+f_{0}^{k}h_{0}v)+ke_{1}^{k-1}[h_{1}, f_{0}^{k}]v+\\
+\frac{k}{2}[h_{1}, e_{1}^{k-1}]f_{0}^{k}v-\frac{k(3k^{2}-3k+2)}{2}e_{1}^{k-1}f_{0}^{k}v
\end{split}
\]
Using Lemma \ref{prop: 8}
\[
\begin{split}
    [h_{1}, f_{0}^{k}]=-2k(f_{0}^{k-1}f_{1}+f_{0}^{k}h_{0})+k(3k-1)f_{0}^{k} \Leftrightarrow \\
    \Leftrightarrow f_{0}^{k-1}f_{1}+f_{0}^{k}h_{0}=-\frac{1}{2k}[h_{1}, f_{0}^{k}]+\frac{3k-1}{2}f_{0}^{k}
\end{split}
\]
Substitute it into our relation:
\[
\begin{split}
\frac{1}{k+1}e_{1}^{k}f_{0}^{k+1}v=f_{0}e_{1}^{k}f_{0}^{k}v+k^{2}e_{1}^{k-1}(-\frac{1}{2k}[h_{1}, f_{0}^{k}]v+\frac{3k-1}{2}f_{0}^{k}v)+ke_{1}^{k-1}[h_{1}, f_{0}^{k}]v+\\
+\frac{k}{2}[h_{1}, e_{1}^{k-1}]f_{0}^{k}v-\frac{k(3k^{2}-3k+2)}{2}e_{1}^{k-1}f_{0}^{k}v=\\
=f_{0}e_{1}^{k}f_{0}^{k}v-\frac{k}{2}e_{1}^{k-1}[h_{1}, f_{0}^{k}]v+\frac{3k^{3}-k^{2}}{2}e_{1}^{k-1}f_{0}^{k}v+ke_{1}^{k-1}[h_{1}, f_{0}^{k}]v+\\
+\frac{k}{2}[h_{1}, e_{1}^{k-1}]f_{0}^{k}v-\frac{3k^{3}-3k^{2}+2k}{2}e_{1}^{k-1}f_{0}^{k}v=\\
=f_{0}e_{1}^{k}f_{0}^{k}v+\frac{k}{2}[h_{1}, e_{1}^{k-1}f_{0}^{k}]v+\frac{3k^{3}-k^{2}-3k^{3}+3k^{2}-2k}{2}e_{1}^{k-1}f_{0}^{k}v=\\
=f_{0}e_{1}^{k}f_{0}^{k}v+\frac{k}{2}[h_{1}, e_{1}^{k-1}f_{0}^{k}]v+\frac{2k^{2}-2k}{2}e_{1}^{k-1}f_{0}^{k}v
\end{split}
\]
Hence, we obtain the following expression:
\[
\begin{split}
\frac{1}{k+1}e_{1}^{k}f_{0}^{k+1}v=f_{0}e_{1}^{k}f_{0}^{k}v+\frac{k}{2}[h_{1}, e_{1}^{k-1}f_{0}^{k}]v+k(k-1)e_{1}^{k-1}f_{0}^{k}v \Leftrightarrow \\
\Leftrightarrow e_{1}^{k}f_{0}^{k+1}v=(k+1)f_{0}e_{1}^{k}f_{0}^{k}v+\frac{k(k+1)}{2}[h_{1}, e_{1}^{k-1}f_{0}^{k}]v+(k-1)k(k+1)e_{1}^{k-1}f_{0}^{k}v
\end{split}
\]

\bibliographystyle{alpha}
\bibliography{reference2}

\noindent\footnotesize{
{\bf Mikhail Gradov}  \\
HSE University, Moscow, Russia \\
{\tt gradov.m@hse.ru}\\

\noindent\footnotesize{
{\bf Aleksei Ilin}  \\
HSE University, Moscow, Russia \\
{\tt alex.omsk2@gmail.com}}\\

\end{document}

